\def\H_0{\mathcal{H}_0(T)}
\def\X{{\mathcal X}}
\def\ind{{\textrm{ind}}}
\def\asc{{\textrm{asc}}}
\def\dsc{{\textrm{dsc}}}
\def\iso{{\textrm{iso }}}
\def\ST{\tau_{AB}}
\def\ind{\textrm{ind}}
\def\X{{\cal X}}
\def\Y{{\cal Y}}
\def\H{{\cal H}}
\def\b{B({\cal X})}
\def\B{B({\cal Y})}
\def\p{{\cal P}}
\def\K{{\cal K}}
\def\iso{\textrm{iso }}
\def\asc{ \textrm{asc}}
\def\dsc{ \textrm{dsc}}
\newtheorem{df}{Definition}[section]
\newtheorem{thm}[df]{Theorem}
\newtheorem{cor}[df]{Corollary}
\newtheorem{rema}[df] {Remark}
\newtheorem{lem}[df] {Lemma}
\def\sfstp{{\hskip-1em}{\bf.}{\hskip1em}}
\def\subject#1{\renewcommand{\thefootnote}{}
\footnote{\it AMS Subject Classification \rm (2010): {#1}}}
\def\keywords#1{\renewcommand{\thefootnote}{}
\footnote{ \it Key words and phrases: {#1}}}
\def\enddemo{\qed \endtrivlist}
\let\csname enddemo*\endcsname=\enddemo
\def\qedsymbol{\ifmmode\bgroup\else$\bgroup\aftergroup$\fi
  \vcenter{\hrule\hbox{\vrule
height.6em\kern.6em\vrule}\hrule}\egroup}
\def\qed{\ifmmode\else\unskip\nobreak\fi\quad\qedsymbol}
\begin{document}
\title
{ \bf  Tensor product of  left polaroid operators\/}

\author {\normalsize ENRICO  BOASSO and B. P. DUGGAL } \vskip 1truecm

\date{ }


\maketitle \thispagestyle{empty} 

\subject{Primary 47A80, 47A53. Secondary 47A10.} \keywords{  \rm Banach space,
left polaroid operator, finitely  left polaroid operator, tensor product, \newline
\indent $\hbox{                      \hskip.19truecm }$left-right
multiplication, generalized $a$-Weyl's theorem.}

\vskip 1truecm

\setlength{\baselineskip}{12pt}

\begin{abstract} \noindent A Banach space operator $T\in \b$ is left polaroid if
for each $\lambda\in\iso\sigma_a(T)$ there is an integer
$d(\lambda)$ such that $\asc(T-\lambda)=d(\lambda)<\infty$ and
$(T-\lambda)^{d(\lambda)+1}\X$ is closed; $T$ is finitely left
polaroid if $\asc(T-\lambda)<\infty$, $(T-\lambda)\X$ is closed
and $\dim(T-\lambda)^{-1}(0)<\infty$ at each
$\lambda\in\iso\sigma_a(T)$. The left polaroid property transfers
from $A$ and $B$ to their tensor product $A\otimes B$, hence also
from $A$ and $B^*$ to the left-right multiplication operator
$\tau_{AB}$, for Hilbert space operators; an additional condition
is required for Banach space operators. The finitely left
polaroid property transfers from $A$ and $B$ to their tensor
product $A\otimes B$ if and only if
$0\not\in\iso\sigma_a(A\otimes B)$; a similar result holds for
 $\tau_{AB}$ for finitely left
polaroid $A$ and $B^*$.
\end{abstract}


\section {\sfstp Introduction}\setcounter{df}{0}

 A Banach space operator $T\in B(\X)$ is {\em polar} at a point
 $\lambda$ in its spectrum $\sigma(T)$ if $T-\lambda I$ has both
 finite ascent $\asc(T-\lambda I)$ and descent $\dsc(T-\lambda I)$.
 Apparently, if $T$ is polar at $\lambda\in\sigma(T)$, then
 $\lambda\in\iso\sigma(T)$, the set of isolated points of
 $\sigma(T)$. We say that $T$ is {\em polaroid} if $T$ is polar at
 every $\lambda\in\iso\sigma(T)$. Given Banach spaces $\X$ and
 $\Y$, let $\X\overline{\otimes}\Y$ denote the algebraic completion, endowed
 with a reasonable uniform cross-norm, of the tensor product of
 $\X$ and $\Y$. It is known, \cite[Theorem 3]{DHK}, that the
 polaroid property transfers from $A\in\b$ and $B\in\B$ to their
 tensor product $A\otimes B\in B(\X\overline{\otimes}\Y)$.\par

\indent  $T\in\b$ is {\em left polar} (respectively, {\em right polar}) of order
 $d$ at a point $\lambda$ in its approximate point spectrum $\sigma_a(T)$
 (respectively, surjectivity spectrum $\sigma_s(T)$) if $\asc(T-\lambda
 I)=d<\infty$ and $(T-\lambda I)^{d+1}(\X)$ is closed (respectively,
 $\dsc(T-\lambda I)=d<\infty$ and $(T-\lambda I)^d\X$ is closed).
 It is known that if $T$ is left polar (respectively, right polar) at
 $\lambda$, then $\lambda\in\iso\sigma_a(T)$ (respectively,
 $\lambda\in\iso\sigma_s(T)$). We say that $T$ is {\em left
 polaroid} (respectively, {\em right polaroid}) if $T$ is left polar
 (respectively, right polar) at every $\lambda\in\iso\sigma_a(T)$ (respectively,
 $\lambda\in\iso\sigma_s(T)$). Apparently, $T$ is right polaroid
 if and only if $T^*$ is left polaroid, $T$ is polaroid
 if it is both left and right polaroid and a polaroid operator $T$ is both left and right
 polaroid whenever $\iso\sigma(T)=\iso\sigma_a(T)\cup\iso\sigma_s(T)$. The question that
 we consider here is the following: Does the left polaroid
 property transfer from $A\in\b$ and $B\in\B$ to  $A\otimes B\in
 B(\X\overline{\otimes}\Y)$? The answer to this question is a yes in the case
 in which $\X$ and $\Y$ are Hilbert spaces. In the general case,
 if $A$ and $B$ are  left polar (of order $d(\lambda)$ and $d(\mu)$ at
 points $\lambda\in\iso\sigma_a(A)$ and $\mu\in\iso\sigma_a(B)$),
 and if the closed subspaces $(A-\lambda I)\X + (A-\lambda
 I)^{-d(\lambda)}(0)$ and $(B-\mu I)\Y + (B-\mu
 I)^{-d(\mu)}(0)$ are complemented in $\X$ and $\Y$ respectively for
 every $\lambda\in\iso\sigma_a(A)$ and $\mu\in\iso\sigma_a(B)$,
 then $A\otimes B$ is left polaroid.\par

\indent A stronger version of the left polaroid property says that
 $T\in\b$ is {\em finitely left polaroid} if $T$ is left polar and
 $\alpha(T-\lambda
 I)=\dim(T-\lambda I)^{-1}(0)<\infty$ at every
 $\lambda\in\iso\sigma_a(T)$. The finitely left polaroid property
 transfers from $A\in\b$ and $B\in\B$ to $A\otimes B$ if and only
 if $0\notin\iso\sigma_a(A\otimes B)$. We characterize  $\sigma_a(A\otimes B)$  in terms
of the set of finite left poles and of the Browder essential
approximate point spectrum  of $A$ and of $B$, see section 4.\par

\indent Similar results will be proved for the elementary operator
$\tau_{AB}=L_AR_B$ both in the case of left polaroid operators and
of finitely left polaroid operators $A$ and $B^*$.\par

\section {\sfstp Preliminaries}\setcounter{df}{0}

 Unless otherwise stated, from now on $\X$ (similarly,
$\Y$) shall denote  a complex Banach space and $\b$ (similarly,
$\B$) the algebra of all bounded linear maps defined on and with
values in $\X$ (respectively, $\Y$). Henceforth, {\em we shall
reserve the symbols $T$ and $S$ for general Banach space
operators, and the symbols $A$ and $B$ for operators $A\in\b$ and
$B\in\B$}. Given $T\in \b$, $T^*\in B(\mathcal{X}^*)$ shall
denote the adjoint of $T$, where $\X^*$ is the dual space of
$\X$. Recall that $T\in \b$ is said to be \it bounded below\rm,
if $T^{-1}(0)=\{0\}$ and the range $T\X$ of $T$ is closed. Denote
the \it approximate point spectrum \rm of $T$ by
$\sigma_a(T)=\{\lambda\in \mathbb{C} \colon T-\lambda \hbox{ is
not bounded below} \}$, where $T-\lambda$ stands for $T-\lambda
I$, $I$ the identity map of $\b$. Let $\sigma_s(T)=\{\lambda\in
\mathbb{C} \colon (T-\lambda)\X\neq \X\}$ denote
 the \it surjectivity spectrum \rm of $T$. Clearly,  $\sigma_a(T)\cup  \sigma_s(T)
=\sigma(T)$, the spectrum of $T$. \par

\indent Given $T\in \b$, if $T\X$ is closed and $\alpha (T)=\dim
T^{-1}(0)$ (resp., $\beta (T)=\dim X/T\X$) is finite, then $T$ is
said to be  \it upper semi-Fredholm \rm (resp., \it lower
semi-Fredholm\rm). Moreover, such an operator has a well defined
\it index \rm, i.e., $\hbox{ind}(T)=\alpha(T)-\beta (T)$.
Naturally, from this class of operators the \it upper
semi-Fredholm spectrum \rm can be derived, i. e., the set
$$\sigma_{SF_+}(T)=\{\lambda\in \mathbb C \colon T-\lambda \hbox{ is not upper semi-Fredholm}\}.$$
The lower semi-Fredholm spectrum can be defined in a similar way and it will be
denoted by $\sigma_{SF_-}(T)$.\par

\indent Let $T\in\b$. Recall that   $\asc(T)$ (respectively,
$\dsc(T)$) is the least non-negative integer $n$ such that
$T^{-n}(0)=T^{-(n+1)}(0)$ (respectively, $T^{n}\X=T^{n+1}\X$); if
no such integer exists, then $\asc(T)$ (respectively, $\dsc(T)$)
is infinite. Recall also that $\asc(T-\lambda) <\infty
\Longrightarrow T$ has the single-valued extension property  at
$\lambda$, and that if $T-\lambda$  is upper semi-Fredholm and
has the single-valued extension property (at $0$)  then
$\asc(T-\lambda)<\infty$.   Here, $T$ has the single-valued
extension property at $\lambda$, shortened henceforth to SVEP at
$\lambda$, if, for every open neighbourhood $\cal{U}$ of
$\lambda$, the only analytic function $f: \cal{U}\rightarrow \X$
satisfying $(T-\lambda)f(\lambda)=0$ is the function $f\equiv 0$.
We say that $T$ has SVEP on a subset of the complex plane
$\mathbb{C}$ if it has SVEP at every point of the subset. \par

\indent The \it Weyl essential approximate point spectrum \rm and
the \it Browder essential  approximate point spectrum \rm of
$T\in \b$  are the sets
$$
\sigma_{aw}(T)=\{\lambda\in \sigma_a(T)\colon
T-\lambda\hbox{ is not upper semi-Fredholm or } 0<\hbox{ind} (T-\lambda) \}$$
and
$$\sigma_{ab}(T)=\{\lambda\in \sigma_a(T)\colon \lambda\in \sigma_{aw}(T)
 \hbox{ or } \asc(T-\lambda)=\infty\},$$
respectively. It is clear that
$$\sigma_{SF_+}(T)\subseteq \sigma_{aw}(T)\subseteq \sigma_{ab}(T)\subseteq \sigma_a(T).$$
\indent Concerning the main properties of the aforementioned
spectra, see  \cite{A,R1,R2,R}.\par

\indent  We say that $T\in\b$ is \it semi B-Fredholm\rm, $T\in
\Phi_{SBF}(\X)$, if there exists a non-negative integer $n$ such
that $T^n\X$ is closed and the induced operator
$T_{[n]}=T|_{T^n\X}$ ($T_{[0]}=T$) is semi-Fredholm, upper or
lower, in the usual sense. Observe that $T_{[m]}$ is then
semi-Fredholm for all $m\geq n$: we define the \it index \rm of
$T$ by $\ind(T)=\ind(T_{[n]})$. Let
$$\Phi_{SBF_+^-}(\X)=\{T\in \Phi_{SBF}(\X): T\hspace{2mm}\mbox{is
upper semi B-Fredholm with}\hspace{2mm} \ind(T)\leq 0\};$$ then
the upper semi B-Weyl spectrum of $T$ is the set
$$\sigma_{UBW}(T)=\{\lambda\in\sigma_a(T):
T-\lambda\notin\Phi_{SBF_+^-}(\X)\}.$$
The lower semi B-Weyl spectrum can be defined in a similar way and it will be
denoted by $\sigma_{LBW}(T)$. In addition, $T$ will be said to be
\it $B$-Weyl, \rm if $T$ is both upper and lower semi B-Fredholm
(equivalently $T$ is \it B-Fredholm\rm)
 and
$\ind (T)=0$. The $B$-Weyl spectrum of $T$ is the set
$$
\sigma_{BW}(T)=\{\lambda\in\sigma(T): T-\lambda \hbox{ is not
$B$-Fredholm or }\ind(T-\lambda)\neq 0\}.
$$
Note that $\sigma_{LBW}(T)=\sigma_{UBW}(T^*)$ and
$\sigma_{BW}(T)=\sigma_{UBW}(T)\cup\sigma_{LBW}(T)$.\par

\indent We say that $T$ is \it
quasi-Fredholm  of degree $d$ ($\geq 0$)\rm,  if $\dim(T^n\X \cap
T^{-1}(0))\setminus (T^{n+1}\X \cap T^{-1}(0)) =0$ for all $n\geq
d$, and the subspaces $T^{-d}(0)+ T\X$ and $T^{-1}(0)\cap T^d\X$
are closed. Every semi B-Fredholm operator is quasi-Fredholm
\cite{BS}.\par

\bigskip
\indent Let $\Pi^{\ell}(T)$ denote the set of left poles of $T\in
\b$, i.e., $\Pi^{\ell}(T)= \{\lambda\in \sigma_a(T)\colon \asc
(T-\lambda)=d<\infty \hbox{ and } (T-\lambda)^{d+1}\X \hbox{ is
closed}\}$.
 If
$\lambda\in \Pi^{\ell}(T)$ is a left pole of order $d$, then
$\lambda\in\iso\sigma_a(T)$, $\lambda\notin\sigma_{UBW}(T)$
(\cite[Lemma 3.1]{D1}), and $(T-\lambda)_{[d]}=
(T-\lambda)|_{(T-\lambda)^d\X}$ is bounded below (\cite[Theorem
2.5]{ABC}), where if $M\subseteq \mathbb C$, then acc $M$ stands
for the set of limit points of $M$ and iso $M=M\setminus$ acc
$M$. (Indeed, $\lambda\in\Pi^{\ell}(T)$ if and only if
$\lambda\notin\sigma_{UBW}(T)$ and $T$ has SVEP at $\lambda$.)
Furthermore, if we let
$$H_0(T)=\{x\in\X:\textrm{lim}_{n\longrightarrow\infty}{||T^n
x||^{\frac{1}{n}}=0}\}$$ denote the
 quasi--nilpotent part of $T$, then
 $H_0(T-\lambda)=(T-\lambda)^{-d}(0)$ (\cite[Theorem 2.3]{A1}).

 \indent  It is known, \cite[Lemma 3.5]{D1}, that if
$T^*$ has SVEP at points $\lambda\notin\sigma_{UBW}(T)$, then
$\sigma_{UBW}(T)=\sigma_{BW}(T)$. This implies that if
$\lambda\in\Pi^{\ell}(T)$ and $T^*$ has SVEP at points
$\lambda\notin\sigma_{UBW}(T)$, then $\lambda\in\iso\sigma(T)$
and $T$ is polar at $\lambda$ (\cite[Corollary 3.13]{D1}).
Consequently, if $T^*$ has SVEP at points
$\lambda\in\Pi^{\ell}(T)$, then
$\Pi^{\ell}(T)=\Pi(T)=\{\lambda\in\iso\sigma(T): T$ is polar at
$\lambda\}$.
\markright{\hskip4.2truecm \rm Tensor product of  left polaroid operators }\par
\indent Concerning finitely left polaroid operators,  recall from \cite[Theorem 3.8]{G} that if $T\in \b$,
 $\alpha(T)< \infty$ and $\asc(T)< \infty$, then $T^n\X$
 is closed for some integer $n> 1$ if and only if $T\X$ is closed. Hence $T$ is
 finitely left polaroid if and only if $\alpha(T-\lambda I)< \infty$,
 $\asc(T-\lambda I)< \infty$ and $(T-\lambda I)\X$ is closed for
 every $\lambda\in\iso\sigma_a(T)$. Let $\Pi^{\ell}_0(T)$ denote the set of finite left poles of $T$, i.e.,
$$
\Pi^{\ell}_0(T)=\{\lambda\in\iso\sigma_a(T): T-\lambda
 \hspace{2mm}\mbox{is upper semi-Fredholm and}\hspace{2mm}
 \asc(T-\lambda) <\infty \}.
$$
Then $T\in \b$ is finitely left polaroid if and only if $\iso
\sigma_a(T)=\Pi^{\ell}_0(T)$\par

\indent In the following remark, several properties of finite left poles
will be recalled.\par
\begin{rema}\label{remark1} \rm  Let $T\in \b$. Then $\sigma_a(T)\setminus \sigma_{ab}(T)
=\Pi_0^{\ell}(T)$ (\cite[Corollary 2.2]{R}).
Additionally, if $\lambda \in \hbox{iso}\hskip.1truecm
\sigma_a(T)$, then the following statements are equivalent:
$$
\hbox{(i)} \hskip.1truecm \lambda\in \sigma_a(T)\setminus \sigma_{ab}(T),\hskip1truecm
\hbox{(ii)} \hskip.1truecm \lambda\in \sigma_a(T)\setminus \sigma_{aw}(T).$$

\indent As a result, if we let $I_0^a(T)= \hbox{iso}\hskip.1truecm
\sigma_a(T)\setminus\Pi_0^l(T)$, then (since
$\sigma_{aw}(T)\subseteq  \sigma_{ab}(T)$),
$$
I_0^a(T)\subseteq  \sigma_{aw}(T)\subseteq I_0^a(T)\cup \hbox{acc}\hskip.1truecm \sigma_a(T)=
 \sigma_{ab}(T)
$$ (\cite[Theorem 2.1 and Corollary
2.3]{R}). Therefore, necessary and sufficient for $T$ to be
finitely left polaroid is that one of the following statements
holds:
\begin{align*}
&\hbox{(iii)}\hskip.1truecm \sigma_{ab}(T)=\hbox{acc}\hskip.1truecm \sigma_a(T),\hskip.2truecm
\hbox{(iv)}\hskip.1truecm  \sigma_{aw}(T)=\hbox{acc}\hskip.1truecm \sigma_a(T)\cap  \sigma_{aw}(T),&\\
&\hbox{(v)} \hskip.1truecm\sigma_a(T)=\sigma_{ab}(T)\cup
\hbox{iso}\hskip.1truecm \sigma_a(T),\hskip.2truecm \sigma_{ab}(A)
\cap \hbox{iso}\hskip.1truecm \sigma_a(A)=\emptyset.&\\
\end{align*}

\indent Note that if $\sigma_a(T)= \{ 0\}$, in particular if $T$
is a quasi-nilpotent operator, then $T$ is not finitely left
polaroid. In fact, if $\sigma_a(T)= \{ 0\}$ and $T$ is finitely
left polaroid, then
 $$
\emptyset=\hbox{acc}\hskip.1truecm \sigma_a(T)=\sigma_{ab}(T).$$
\noindent Since $\sigma_{ab}(T)\neq\emptyset$  (\cite[Corollary
2.4]{R} and \cite[Theorem 1]{R1}), this is a contradiction.

\end{rema}
\medskip

\medskip
\indent Let $\X\overline{\otimes}\Y$ denote the completion of the
algebraic tensor product  of $\X$ and $\Y$, $\X\otimes \Y$,
relative to some reasonable cross norm; let
 $A\otimes B\in B( \X\overline{\otimes}\Y)$  denote the tensor
product of $A$ and $B$. Then, \cite[Lemma 5]{DDK},
$$
\sigma_{ab}(A\otimes
B)=\sigma_a(A)\sigma_{ab}(B)\cup\sigma_{ab}(A)\sigma_a(B).
$$

\noindent Again, if   $\tau_{AB}=L_AR_B\in B(B(\Y,\X))$ denotes
the elementary operator
$$\tau_{AB}(X)=L_AR_B(X)=AXB,$$  then $$
\sigma_{ab}(\tau_{AB})=\sigma_a(A)\sigma_{ab}(B^*)\cup\sigma_{ab}(A)\sigma_a(B^*),
$$ \cite[Proposition 4.3 (iv)]{BDJ}.
\medskip

\indent The following lemma studies the sets of the limit and
the isolated points of the operators considered in this
article.\par
 \begin{lem}\label{lemma1} If $A$ and $B$ are
finitely left polaroid, then the following  statements hold.\rm\par
\noindent (i) acc $\sigma_a(A\otimes B)\subseteq \sigma_{ab}(A\otimes B)\subseteq$
acc $\sigma_a(A\otimes B)\cup \{0 \}$;\par
\noindent (ii) $\iso \sigma_a(A\otimes B)\setminus \{0\}\subseteq \Pi_0^l(A)\cdot\Pi_0^l(B)\subseteq
\iso\sigma_a(A\otimes B)\cup \{0 \}.$\par
\indent \it If, instead, $A$ and $B^*$ are finitely  left
polaroid, then the following statements hold.\rm\par
\noindent (iii) acc $\sigma_a(\tau_{AB})\subseteq \sigma_{ab}(\tau_{AB})\subseteq$
acc $\sigma_a(\tau_{AB})\cup \{0 \}$;\par
\noindent (iv) $\iso \sigma_a(\tau_{AB})\setminus \{0\}\subseteq \Pi_0^l(A)\cdot\Pi_0^a(B^*)\subseteq
\iso \sigma_a(\tau_{AB})\cup \{0 \}$.
\end{lem}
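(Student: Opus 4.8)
The plan is to combine three ingredients: the multiplicativity of the approximate point spectrum under the two constructions, $\sigma_a(A\otimes B)=\sigma_a(A)\,\sigma_a(B)$ and $\sigma_a(\tau_{AB})=\sigma_a(A)\,\sigma_a(B^*)$ (the inclusions $\supseteq$ are immediate, and the reverse ones are standard, the tensor case being taken up in \S4); the product formulas for $\sigma_{ab}(A\otimes B)$ and $\sigma_{ab}(\tau_{AB})$ recorded just above; and the reformulation of the hypothesis via Remark~\ref{remark1}: an operator $T$ is finitely left polaroid iff $\sigma_{ab}(T)=\acc\sigma_a(T)$, equivalently iff $\Pi_0^{\ell}(T)=\sigma_a(T)\setminus\sigma_{ab}(T)=\iso\sigma_a(T)$. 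I also use the inclusion $\acc\sigma_a(T)\subseteq\sigma_{ab}(T)$, valid for every $T\in\b$ because $\sigma_a(T)\setminus\sigma_{ab}(T)=\Pi_0^{\ell}(T)\subseteq\iso\sigma_a(T)$ and $\sigma_a(T)$ is closed. Since (iii)--(iv) are obtained from (i)--(ii) by replacing $B$ by $B^*$, $A\otimes B$ by $\tau_{AB}$, and invoking the second product formula for $\sigma_{ab}$, it suffices to prove (i) and (ii).

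For (i): the left inclusion is the displayed general fact applied to $T=A\otimes B$. For the right one, the hypothesis turns the product formula into
$$\sigma_{ab}(A\otimes B)=\sigma_a(A)\,\acc\sigma_a(B)\ \cup\ \acc\sigma_a(A)\,\sigma_a(B),$$
so it suffices to prove $\sigma_a(A)\,\acc\sigma_a(B)\subseteq\acc\sigma_a(A\otimes B)\cup\{0\}$ (and the symmetric statement): if $\alpha\beta\neq0$ with $\alpha\in\sigma_a(A)$ and $\beta\in\acc\sigma_a(B)$, choose $\beta_n\in\sigma_a(B)\setminus\{\beta\}$ with $\beta_n\to\beta$; then $\alpha\beta_n\in\sigma_a(A)\sigma_a(B)=\sigma_a(A\otimes B)$, $\alpha\beta_n\to\alpha\beta$, and $\alpha\beta_n\neq\alpha\beta$ because $\alpha\neq0$, hence $\alpha\beta\in\acc\sigma_a(A\otimes B)$.

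For the first inclusion of (ii) I use the same perturbation device: if $\lambda\in\iso\sigma_a(A\otimes B)$ with $\lambda\neq0$, write $\lambda=\alpha\beta$ with $\alpha\in\sigma_a(A)$, $\beta\in\sigma_a(B)$, necessarily both nonzero; were $\alpha\in\acc\sigma_a(A)$, perturbing $\alpha$ inside $\sigma_a(A)$ and multiplying by the fixed $\beta\neq0$ would realise $\lambda$ as a limit point of $\sigma_a(A\otimes B)$, a contradiction, so $\alpha\in\iso\sigma_a(A)=\Pi_0^{\ell}(A)$, and likewise $\beta\in\Pi_0^{\ell}(B)$. The main obstacle is the reverse inclusion $\Pi_0^{\ell}(A)\cdot\Pi_0^{\ell}(B)\subseteq\iso\sigma_a(A\otimes B)\cup\{0\}$. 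The route I would take: fix $\alpha\in\Pi_0^{\ell}(A)$, $\beta\in\Pi_0^{\ell}(B)$ with $\alpha\beta\neq0$; since by (i) the sets $\acc\sigma_a(A\otimes B)$ and $\sigma_{ab}(A\otimes B)$ differ by at most $\{0\}$, it is enough to show $\alpha\beta\notin\sigma_{ab}(A\otimes B)=\sigma_a(A)\,\acc\sigma_a(B)\cup\acc\sigma_a(A)\,\sigma_a(B)$, and one tries to derive a contradiction from a factorization $\alpha\beta=\alpha'\beta'$ with, say, $\alpha'\in\acc\sigma_a(A)$ and $\beta'\in\sigma_a(B)$, using that $\alpha\in\iso\sigma_a(A)$ is disjoint from $\acc\sigma_a(A)$ (and symmetrically for $\beta$). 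The delicate point, and where I expect the real difficulty to lie, is that this second factorization of $\alpha\beta$ need not be related to the given one, so excluding it seems to require the finitely left polaroid structure of $A$ and $B$ used in tandem together with a careful separate treatment of the value $0$; this is presumably why both hypotheses are imposed and why the exceptional set $\{0\}$ is present.
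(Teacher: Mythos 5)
Your arguments for (i), for the first inclusion in (ii), and for the reduction of (iii)--(iv) to the same set-theoretic facts with $B^*$ in place of $B$ are correct, and they in effect re-derive what the paper obtains by citation: the paper's proof consists only of the identities $\sigma_a(A\otimes B)=\sigma_a(A)\sigma_a(B)$, $\sigma_{ab}(A\otimes B)=\sigma_a(A)\sigma_{ab}(B)\cup\sigma_{ab}(A)\sigma_a(B)$, $\iso\sigma_a(A)=\Pi_0^{\ell}(A)$, $\sigma_{ab}(A)=\acc\,\sigma_a(A)$ (and their analogues for $B$, $B^*$, $\tau_{AB}$), followed by an appeal to \cite[Theorem 6]{HK} for the resulting statements about $\acc$ and $\iso$ of a product $KH$ of compact sets. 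Your subsequence arguments supply exactly those statements for $\acc(KH)$ and for $\iso(KH)\setminus\{0\}\subseteq(\iso K)(\iso H)$, with $K=\sigma_a(A)$ and $H=\sigma_a(B)$, so up to that point the proposal is a self-contained version of the paper's proof.

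The gap is the second inclusion of (ii) (hence also of (iv)): you describe a strategy and explicitly concede you cannot close it, so the proof is incomplete. Worse, your instinct that the real difficulty sits here is fully justified, because under the standing hypotheses that inclusion is precisely the set-theoretic statement $(\iso K)(\iso H)\subseteq\iso(KH)\cup\{0\}$, and this is false for general compact sets: take $K=\{1\}\cup\{1+1/n:n\geq 1\}$ and $H=\{1\}\cup\{n/(n+1):n\geq 1\}$; then $2\in\iso K$ and $\frac{1}{2}\in\iso H$, yet $2\cdot\frac{1}{2}=1$ is the limit of the points $1\cdot\frac{m}{m+1}\in KH$, so $1\in\acc(KH)$ and $1\notin\iso(KH)\cup\{0\}$. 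Both sets are realized as $\sigma_a$ of finitely left polaroid operators (diagonal normal operators with the indicated eigenvalues, each of multiplicity one), so the finitely left polaroid hypothesis places no restriction that could rescue the inclusion, and the "second factorization" obstruction you identified (here $1=1\cdot 1$ with $1\in\acc K$) cannot be excluded by any argument of the kind you sketch. In other words, you have not merely left a step unfinished; you have located a genuine defect in this half of the lemma, which the paper's proof does not confront because it disposes of it by the bare citation of \cite[Theorem 6]{HK}. Parts (i) and (iii) and the first inclusions of (ii) and (iv) stand as you proved them; the second inclusions of (ii) and (iv) would need to be weakened (for instance to $\Pi_0^{\ell}(A)\cdot\Pi_0^{\ell}(B)\subseteq\sigma_a(A\otimes B)$) or supplemented by extra hypotheses ruling out coincidences of products.
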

\begin{proof} Since $\hbox{iso}\hskip.1truecm
\sigma_a(A)=\Pi_0^l(A)$ and $\hbox{iso}\hskip.1truecm
\sigma_a(B)=\Pi_0^l(B)$ (Remark \ref{remark1}), the proof of $(i)$ and $(ii)$ is
immediate from \cite[Theorem 6]{HK} once one observes that
$\sigma_a(A\otimes B)=\sigma_a(A)\cdot \sigma_a(B)$ (\cite[Theorem
4.4]{I}), $\sigma_{ab}(A)=\hbox{acc}\hskip.1truecm \sigma_a(A)$,
$\sigma_{ab}(B)=\hbox{acc}\hskip.1truecm \sigma_a(B)$ (Remark
\ref{remark1}), and $\sigma_{ab}(A\otimes
B)=\sigma_a(A)\sigma_{ab}(B)\cup\sigma_{ab}(A)\sigma_a(B)$
(\cite[Lemma 5]{DDK}).\par
\markright{ \hskip5truecm \rm ENRICO BOASSO and B. P. DUGGAL}
\indent One argues similarly  to prove $(iii)$ and $(iv)$:
observe that  $\hbox{iso}\hskip.1truecm \sigma_a(A)=\Pi_0^l(A)$
and $\hbox{iso}\hskip.1truecm \sigma_a(B^*)=\Pi_0^l(B^*)$,
$\sigma_a(\tau_{AB})=\sigma_a(A)\cdot \sigma_a(B^*)$
(\cite[Proposition 4.3 (i)]{BDJ}) and
$\sigma_{ab}(\tau_{AB})=\sigma_a(A)\sigma_{ab}(B^*)\cup\sigma_{ab}(A)\sigma_a(B^*)
$  (\cite[Proposition 4.3 (iv)]{BDJ}).
\end{proof}

\begin{rema}\label{remark101} \rm Note that under the conditions of Lemma \ref{lemma1}
$$
\sigma_{ab}(A\otimes B)=\sigma_{ab}(A)\cdot \sigma_{ab}(B)\cup
\sigma_{ab}(A)\cdot \Pi^l_0 (B)\cup \Pi^l_0(A)\cdot\sigma_{ab}(B)
$$  (\cite[Lemma 5]{DDK}). Similarly
$$
\sigma_{ab}(\ST)=\sigma_{ab}(A)\cdot \sigma_{ab}(B^*)\cup
\sigma_{ab}(A)\cdot \Pi^l_0 (B^*)\cup
\Pi^l_0(A)\cdot\sigma_{ab}(B^*)
$$  (\cite[Proposition 4.3 (iv)]{BDJ}).
\end{rema}

\section {\sfstp Left polaroid operators}\setcounter{df}{0}

 We say that a left polar
operator $T\in\b$, of order $d(\lambda)$ at
$\lambda\in\iso\sigma_a(T)$, satisfies property $(\p)$ if the
closed subspace $(T-\lambda)^{-d(\lambda)}(0) + (T-\lambda)\X$ is
complemented in $\X$ for every $\lambda\in\iso\sigma_a(T)$. The
following lemma proves that left polaroid operators satisfying
property $\p$ have a Kato type decomposition.
\par
\markright{\hskip4.2truecm \rm Tensor product of  left polaroid operators }
\begin{lem}\label{lem1} If $T\in\b$ is left polaroid and satisfies
property $(\p)$, then for every $\lambda\in\iso\sigma_a(T)$ there
exist $T$-invariant closed subspaces $E_1$ and $E_2$ such that
$\X=E_1\oplus E_2$,
$H_0(T-\lambda)=(T-\lambda)^{-d(\lambda)}(0)=H_0((T-\lambda)|_{E_1})$
and $(T-\lambda)|_{E_2}$ is bounded below, where $d(\lambda)$ is
the order of the left pole at $\lambda$.
\end{lem}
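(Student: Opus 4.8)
The plan is to localise the statement at a single $\lambda\in\iso\sigma_a(T)$ and to read off the desired pair $E_1,E_2$ from a Kato-type decomposition of $T-\lambda$. First I would fix such a $\lambda$, set $d=d(\lambda)$, and replace $T$ by $T-\lambda$; thus we may assume $\lambda=0$, so that $T$ is left polar of order $d$ at $0\in\iso\sigma_a(T)$, i.e. $\asc(T)=d$ and $T^{d+1}\X$ is closed. By the results collected in Section~2, $0\notin\sigma_{UBW}(T)$, so $T$ is upper semi B-Fredholm and hence quasi-Fredholm; in particular, $0$ being a left pole of order $d$, the subspace $T^{-d}(0)+T\X$ is closed (this is what makes property $(\p)$ meaningful) and $T_{[d]}=T|_{T^{d}\X}$ is bounded below, so that $T^{-1}(0)\cap T^{d}\X=\ker T_{[d]}=\{0\}$. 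Also $H_0(T)=T^{-d}(0)$.

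Next I would invoke property $(\p)$: it tells us $T^{-d}(0)+T\X$ is complemented in $\X$, while $T^{-1}(0)\cap T^{d}\X=\{0\}$ is complemented trivially. By the standard characterisation of Kato-type operators within the class of quasi-Fredholm operators, $T$ is then of Kato type: there exist closed $T$-invariant subspaces $M$ and $N$ with $\X=M\oplus N$, $T|_{M}$ semi-regular (i.e. with closed range and $\ker(T|_M)\subseteq (T|_M)^{n}(M)$ for all $n$) and $T|_{N}$ nilpotent, of order $p$ say. This is the step I expect to be the main obstacle: property $(\p)$ is tailored precisely to supply the one complementation hypothesis of that characterisation which is not automatic, the other being handed to us for free by the fact that $T_{[d]}$ is bounded below.

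It remains to identify $E_1$ and $E_2$. Since $M,N$ are $T$-invariant with $\X=M\oplus N$ we have $\asc(T|_M)\le\asc(T)=d<\infty$, and a semi-regular operator of finite ascent is injective: a non-zero $x\in\ker(T|_M)$ would, by semi-regularity, lie in $(T|_M)^{n}(M)$ for every $n$, forcing $\asc(T|_M)=\infty$. Hence $T|_M$ is injective, and being semi-regular it has closed range, so $T|_M$ is bounded below. Moreover $\ker(T^{n})=\ker(T^{n}|_M)\oplus\ker(T^{n}|_N)=\ker(T^{n}|_N)$ for every $n$, whence $\asc(T)=\asc(T|_N)=p$; therefore $p=d$ and $N=\ker(T^{d}|_N)=T^{-d}(0)=H_0(T)$.

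Finally I would take $E_1=N$ and $E_2=M$. These are closed and $T$-invariant, $\X=E_1\oplus E_2$, $T|_{E_2}$ is bounded below, and $H_0(T|_{E_1})=H_0(T|_N)=N=T^{-d}(0)=H_0(T)$ since $T|_N$ is nilpotent. Undoing the translation by $\lambda$ restores the general case and yields $H_0(T-\lambda)=(T-\lambda)^{-d(\lambda)}(0)=H_0((T-\lambda)|_{E_1})$, which proves the lemma.
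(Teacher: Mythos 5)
Your argument is correct and follows essentially the same route as the paper: both pass from the left pole condition to quasi-Fredholmness, use property $(\p)$ together with the (trivial) complementation of $(T-\lambda)^{-1}(0)\cap(T-\lambda)^{d}\X$ to invoke M\"{u}ller's Kato-type decomposition, and then deduce that the semi-regular summand is injective (hence bounded below) from finite ascent. The only divergence is cosmetic: you identify $E_1=(T-\lambda)^{-d}(0)$ directly by an ascent count on the nilpotent summand, whereas the paper obtains $E_1=H_0(T-\lambda)$ from the additivity of the quasi-nilpotent part and then applies Aiena's identity $H_0(T-\lambda)=(T-\lambda)^{-d}(0)$; both are valid.
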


\begin{demo} The hypotheses imply that $T-\lambda$ is
quasi-Fredholm of order $d(\lambda)$, and the closed subspaces
$(T-\lambda)^{-d(\lambda)}(0) + (T-\lambda)\X$ and
$(T-\lambda)^{-1}(0)\cap (T-\lambda)^{d(\lambda)}\X$ are
complemented in $\X$. Hence, \cite[Theorem 5]{Mu}, there exist
$T$-invariant closed subspaces $E_1$ and $E_2$ such that
$\X=E_1\oplus E_2$, $(T-\lambda)^{d(\lambda)}|_{E_1}=0$ and
$(T-\lambda)|_{E_2}$ is semi-regular. (Recall, \cite[Page 7]{A},
that $T-\lambda$ is semi-regular if $(T-\lambda)\X$ is closed and
$(T-\lambda)^{-n}(0)\subseteq (T-\lambda)^m\X$ for all natural
numbers $m,n$.) Since
$\asc(T-\lambda)=d(\lambda)<\infty\Longleftrightarrow
(T-\lambda)^{d(\lambda)}\X\cap (T-\lambda)^{-n}(0)=\{0\}$ for
every natural number $n$, the semi-regular operator
$(T-\lambda)|_{E_2}$ is injective. Hence $(T-\lambda)|_{E_2}$ is
bounded below. Observe that
\begin{eqnarray*} H_0(T-\lambda) & = &
H_0((T-\lambda)|_{E_1}) \oplus H_0((T-\lambda)|_{E_2})\\ & = & E_1
\oplus 0= E_1.\end{eqnarray*} This, since
$H_0(T-\lambda)=(T-\lambda)^{-d(\lambda)}(0)$ by \cite[Theorem
2.3]{A1}, completes the proof.\end{demo}

\indent Next follows the main result of this section.\par

\begin{thm}\label{thm1}
Let $A$ and $B$ be left polaroid operators. If $A$ and $B$
satisfy property $(\p)$, or if $\X$ and $\Y$ are Hilbert spaces,
then $A\otimes B$ is left polaroid.\end{thm}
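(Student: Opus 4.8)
The plan is to prove that $A\otimes B$ is left polar at every point $\nu\in\iso\sigma_a(A\otimes B)$, which is exactly what it means for $A\otimes B$ to be left polaroid. Throughout I will use that $\sigma_a(A\otimes B)=\sigma_a(A)\cdot\sigma_a(B)$ (\cite[Theorem 4.4]{I}), and that Lemma~\ref{lem1} is available to $A$ at each $\lambda\in\iso\sigma_a(A)$ and to $B$ at each $\mu\in\iso\sigma_a(B)$ — property $(\p)$ being assumed in the Banach space case and automatic in the Hilbert space case. The reduction I will use is: if for a given $\nu$ one can write $\X\overline{\otimes}\Y=F_1\oplus F_2$ with $F_1,F_2$ closed and $(A\otimes B)$-invariant, $(A\otimes B-\nu)|_{F_1}$ nilpotent of some index $p$, and $(A\otimes B-\nu)|_{F_2}$ bounded below, then $A\otimes B$ is left polar at $\nu$: indeed $\asc(A\otimes B-\nu)=p<\infty$ and $(A\otimes B-\nu)^{p+1}(\X\overline{\otimes}\Y)=\big((A\otimes B-\nu)|_{F_2}\big)^{p+1}F_2$ is closed, being the range of a bounded-below operator.

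First I would treat $\nu\neq 0$. Since $\nu$ is isolated in $\sigma_a(A)\cdot\sigma_a(B)$ and $\nu\neq 0$, a short point-set argument (were $\sigma_a(A)$ to accumulate at some $\lambda$ with $\nu/\lambda\in\sigma_a(B)$, then $\sigma_a(A)\cdot\sigma_a(B)$ would accumulate at $\nu$) shows that there are only finitely many pairs $(\lambda_i,\mu_i)$, $i=1,\dots,k$, in $\sigma_a(A)\times\sigma_a(B)$ with $\lambda_i\mu_i=\nu$; the $\lambda_i$ are pairwise distinct, so are the $\mu_i$, and each $\lambda_i\in\iso\sigma_a(A)$, $\mu_i\in\iso\sigma_a(B)$. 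Lemma~\ref{lem1} then gives $A$-invariant closed splittings $\X=X^1_i\oplus X^2_i$ with $(A-\lambda_i)|_{X^1_i}$ nilpotent and $(A-\lambda_i)|_{X^2_i}$ bounded below, where $X^1_i=H_0(A-\lambda_i)=(A-\lambda_i)^{-d(\lambda_i)}(0)$; similarly $\Y=Y^1_i\oplus Y^2_i$. Because $X^1_i$ is the generalised kernel of $A-\lambda_i$ and $(A-\lambda_i)|_{X^1_j}$ is invertible for $i\neq j$, one checks $X^1_i\subseteq X^2_j$ for $i\neq j$, whence the idempotents onto the $X^1_i$ annihilate each other and $\X=\big(\bigoplus_{i=1}^k X^1_i\big)\oplus\wt X$ with $\wt X:=\bigcap_i X^2_i$, all summands $A$-invariant and $(A-\lambda_i)|_{\wt X}$ bounded below for every $i$ (the restriction of a bounded-below operator to an invariant subspace is bounded below). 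Write $\Y=\big(\bigoplus_{i=1}^k Y^1_i\big)\oplus\wt Y$ in the same way.

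Next I would tensor the two decompositions. Since the cross norm is uniform, these finite splittings lift through $\overline{\otimes}$, yielding an $(A\otimes B)$-invariant splitting of $\X\overline{\otimes}\Y$ into the pieces $X^1_i\overline{\otimes}Y^1_j$ $(1\le i,j\le k)$, $X^1_i\overline{\otimes}\wt Y$, $\wt X\overline{\otimes}Y^1_j$ and $\wt X\overline{\otimes}\wt Y$. Writing $A|_{X^1_i}=\lambda_iI+N_i$ and $B|_{Y^1_j}=\mu_jI+M_j$ with $N_i,M_j$ nilpotent, on $X^1_i\overline{\otimes}Y^1_j$ one has $A\otimes B-\lambda_i\mu_jI=\lambda_i(I\otimes M_j)+\mu_j(N_i\otimes I)+N_i\otimes M_j$, a sum of commuting nilpotents, hence nilpotent; so $A\otimes B-\nu$ is nilpotent on $X^1_i\overline{\otimes}Y^1_i$, while for $i\neq j$ one has $\lambda_i\mu_j\neq\nu$ (else $(\lambda_i,\mu_j)$ would be among the listed pairs, forcing $j=i$) and hence $A\otimes B-\nu$ invertible on $X^1_i\overline{\otimes}Y^1_j$. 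On each of the remaining pieces the approximate point spectrum of the tensor product equals the product of the approximate point spectra of the restrictions (\cite[Theorem 4.4]{I} again); since $\sigma_a(A|_{\oplus_i X^1_i})=\{\lambda_1,\dots,\lambda_k\}$ and $\sigma_a(A|_{\wt X})\subseteq\sigma_a(A)$ with $\lambda_i\notin\sigma_a(A|_{\wt X})$ for all $i$ ($\sigma_a$ splitting over the invariant decomposition of $\X$), and symmetrically $\mu_i\notin\sigma_a(B|_{\wt Y})\subseteq\sigma_a(B)$ for all $i$, no factorisation of $\nu$ survives into these products, so $A\otimes B-\nu$ is bounded below on $X^1_i\overline{\otimes}\wt Y$, $\wt X\overline{\otimes}Y^1_j$ and $\wt X\overline{\otimes}\wt Y$. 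Taking $F_1=\bigoplus_{i=1}^k X^1_i\overline{\otimes}Y^1_i$ and $F_2$ the direct sum of all the other pieces meets the reduction, so $A\otimes B$ is left polar at $\nu$. The case $\nu=0$ runs along the same lines with minor variations: if $\sigma_a(A)=\{0\}$ then $A$ is nilpotent by Lemma~\ref{lem1}, whence $A\otimes B$ is nilpotent, and symmetrically for $B$; otherwise $0$, whenever it lies in $\sigma_a(A)$ (resp. $\sigma_a(B)$), is isolated there, Lemma~\ref{lem1} splits $\X$ (resp. $\Y$), every tensor piece carrying an $X^1$- or $Y^1$-factor makes $A\otimes B$ a tensor product with a nilpotent factor hence nilpotent, and on the residual piece $\sigma_a$ avoids $0$ so $A\otimes B$ is bounded below. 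Either way $A\otimes B$ is left polar at every isolated point of $\sigma_a(A\otimes B)$, i.e. left polaroid.

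I expect the main obstacle to be twofold. The combinatorial part — merging the finitely many Kato-type decompositions of $\X$ and of $\Y$ coming from the distinct factorisations $\nu=\lambda_i\mu_i$ into one two-term $(A\otimes B)$-invariant splitting and then classifying the $(k+1)^2$ tensor pieces — relies essentially on the identification $X^1_i=H_0(A-\lambda_i)=(A-\lambda_i)^{-d(\lambda_i)}(0)$ and on the multiplicativity of $\sigma_a$ under tensor products. The more delicate part is ensuring that a topological direct-sum decomposition of $\X$ — which is precisely what property $(\p)$ delivers through Lemma~\ref{lem1}, and which in a Hilbert space is free — passes through the completed tensor product $\overline{\otimes}$ with the summands inheriting their cross norms, and that $\sigma_a$ remains multiplicative for the restricted operators on these sub-tensor-products; this is exactly the point at which the hypothesis ``$A$ and $B$ satisfy $(\p)$, or $\X$ and $\Y$ are Hilbert spaces'' is used.
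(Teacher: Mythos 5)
Your proposal is correct and follows the same basic strategy as the paper: apply Lemma~\ref{lem1} to get Kato-type splittings of $\X$ and $\Y$ into a nilpotent part and a bounded-below part, tensor the splittings, and sort the resulting pieces of $\X\overline{\otimes}\Y$ into a summand on which $A\otimes B-\nu$ is nilpotent and one on which it is bounded below. The substantive difference is that you treat \emph{all} factorisations $\nu=\lambda_i\mu_i$ with $\lambda_i\in\sigma_a(A)$, $\mu_i\in\sigma_a(B)$ simultaneously (first proving there are finitely many and that each factor is isolated, then merging the corresponding decompositions via $X^1_i\subseteq X^2_j$ for $i\neq j$), whereas the paper fixes a single factorisation $\lambda=\mu\nu$ and asserts that $\mu\notin\sigma_a(A|_{M_2})$ and $\nu\notin\sigma_a(B|_{N_2})$ already give $\lambda\notin\sigma_a\bigl(A|_{M_2}\otimes B|_{N_2}\bigr)$. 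That assertion is exactly where multiple factorisations matter: if $\lambda=\mu'\nu'$ with $\mu'\in\sigma_a(A|_{M_2})$ and $\nu'\in\sigma_a(B|_{N_2})$ (e.g.\ $\sigma_a(A)=\sigma_a(B)=\{1,2\}$ and $\lambda=2=1\cdot 2=2\cdot 1$), then $\lambda$ does lie in $\sigma_a(A\otimes B|_{M_2\overline{\otimes}N_2})$ and the paper's two-summand conclusion fails for that choice of splitting. So your extra combinatorial layer is not pedantry; it repairs a genuine lacuna in the published argument, at the cost of a longer bookkeeping step. Both proofs share the same remaining delicate point, which you correctly isolate: that the topological splittings pass through the completed tensor product and that $\sigma_a$ of the restricted tensor products is still multiplicative for the induced cross norms --- this is precisely where property $(\p)$ (or the Hilbert space hypothesis) enters, just as in the paper. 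Your treatment of $\nu=0$ matches the paper's case $(ii)$, including the observation that $\sigma_a(A)=\{0\}$ forces $A$ nilpotent under the standing hypotheses.
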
\begin{demo} We
consider the case in which $\X, \Y$ are Banach spaces and $A, B$
satisfy property $(\p)$; since $A, B$ automatically satisfy
property $(\p)$ in the case in which $\X, \Y$ are Hilbert spaces,
the proof for the Hilbert space case is a consequence of the
Banach space case.\par

\indent Since $\sigma_a(A\otimes B)=\sigma_a(A)\sigma_a(B)$,
$\iso\sigma_a(A\otimes B)= \iso(\sigma_a(A)\sigma_a(B))\subseteq
\iso\sigma_a(A) \iso\sigma_a(B)$; furthermore, this is easily
seen, $\iso\sigma_a(A\otimes B)\setminus\{0\}\subseteq
\iso\sigma_a(A) \iso\sigma_a(B)\subseteq \iso\sigma_a(A\otimes
B)\cup\{0\}$. We consider the cases $(i)\hspace{2mm}
0\neq\lambda\in\iso\sigma_a(A\otimes B)$ and $(ii)\hspace{2mm}
0=\lambda\in\iso\sigma_a(A\otimes B)$ separately.\par

\indent $(i).$ In this case, for every $\lambda\in\iso\sigma_a(A\otimes
B)$ there exist non-zero $\mu\in\iso\sigma_a(A)$ and
$\nu\in\iso\sigma_a(B)$ such that $\mu \nu=\lambda$. The operator
$A$ and $B$ being left polaroid operators which satisfy property
$(\p)$, there exist (by Lemma \ref{lem1}) $A$-invariant closed
subspaces $M_1$ and $M_2$, and $B$-invariant closed subspaces
$N_1$ and $N_2$, such that

$$\X=M_1\oplus M_2, \Y=N_1\oplus N_2,
\hspace{2mm}\mbox{and}\hspace{2mm}
\X\overline{\otimes}\Y=M_1\overline{\otimes} N_1\oplus
M_1\overline{\otimes} N_2\oplus M_2\overline{\otimes} N_1 \oplus
M_2\overline{\otimes} N_2,$$ where the closed subspaces
$M_i\overline{\otimes} N_j$, $1\leq i,j \leq 2$, are $A\otimes
B$-invariant and, for some integers $d_1, d_2\geq 1$, $$(A-\mu
I)^{d_1}|_{M_1} =0 = (B-\nu I)^{d_2}|{N_1},
\hspace{2mm}\mbox{and}\hspace{2mm} (A-\mu I)|_{M_2}, (B-\nu I
)|_{N_2} \hspace{2mm}\mbox{are bounded below}.$$

\indent Let $d_1
+d_2=d$. Then, since $$A\otimes B- \lambda(I\otimes I)= (A-\mu I
)\otimes B + (\mu I \otimes (B-\nu I))= S + T
\hspace{2mm}\mbox{say},$$
$$\{A\otimes B-
\lambda(I\otimes I)\}^d= \sum_{k=0}^d \left(\begin{array}{crcl}d\\
 k\end{array}\right) S^k T^{d-k}$$ implies that $$ \{A\otimes B-
\lambda(I\otimes I)\}^d|_{M_i\overline{\otimes} N_j} =0; 1\leq i,j \leq
2\hspace{2mm}\mbox{and}\hspace{2mm} i,j\neq 2.$$ \noindent
Furthermore, since $\mu\notin\sigma_a(A|_{M_2})$ and
$\nu\notin\sigma_a(B|_{N_2})$,
$\lambda=\mu\nu\notin\sigma_a(A|_{M_2}\otimes
B|_{N_2})=\sigma_a(A\otimes B|_{M_2\overline{\otimes} N_2})$, and
hence $\{A\otimes B- \lambda(I\otimes I)\}|_{M_2\overline{\otimes}
N_2}$ is bounded below. Thus $\X\overline{\otimes} \Y$ is the
direct sum of two $A\otimes B$-invariant closed subspaces of
$\X\overline{\otimes} \Y$ such that the restriction of $A\otimes
B- \lambda(I\otimes I)$ to one of them is nilpotent and its
restriction to the other is bounded below. Apparently,
$asc(A\otimes B- \lambda(I\otimes I))\leq d<\infty$ and
$\{A\otimes B- \lambda(I\otimes I)\}^{d+1}(\X\overline{\otimes}
\Y)$ is closed; hence $A\otimes B$ is left polar at $\lambda$.\par

\indent $(ii).$ If $\lambda=0\in\iso\sigma_a(A\otimes B)$, then either
$(a)$ $0$ is not in one of $\sigma_a(A)$ and $\sigma_a(B)$, or
$(b)$ $0\in \sigma_a(A)\cap\sigma_a(B)$. If $(a)$ holds and
$0\notin\sigma_a(A)$, then $0\in\iso\sigma_a(B)$, $A$ is left
invertible and there exist $B$-invariant closed subspaces $N_1$
and $N_2$ such that $\Y=N_1\oplus N_2$, $B|_{N_1}$ is nilpotent
and $B|_{N_2}$ is bounded below. Since $\X\overline{\otimes} \Y= \X\overline{\otimes}
N_1 \oplus \X\overline{\otimes} N_2$, $A\otimes B|_{\X\overline{\otimes} N_1}$ is
nilpotent and $A\otimes B|_{X\overline{\otimes} N_2}$ is bounded below. Thus
$A\otimes B$ is left polar at $0$. Since a similar argument works
for the case in which $0\in\iso\sigma_a(A)$ and
$0\notin\sigma_a(B)$, we are left with case $(b)$. If
$0\in\sigma_a(A)\cap\sigma_a(B)$, then either $(b_1)$
$0\in\iso\sigma_a(A)\cap\iso\sigma_b(B)$, or $(b_2)$
$0\in\iso\sigma_a(A)\cap \rm{acc}\sigma_a(B)$, or $(b_3)$
$0\in\rm{acc}\sigma_a(A)\cap\iso\sigma_a(B)$. If $(b_1)$ holds,
then we copy the argument of $(i)$ above, with $\mu=\nu=0$, to
obtain $A\otimes B$ is left polar at $0$. If, instead, $(b_2)$
(respectively, $(b_3)$) holds, then $\sigma_a(A)=\{0\}$ (respectively,
$\sigma_a(B)=\{0\}$), and $A$ (respectively, $B$) is nilpotent. This
implies that $A\otimes B$ is nilpotent, hence left
polaroid.\end{demo}

\indent  Evidently, Theorem \ref{thm1} has a right polar
analogue. Observe that if an operator $T\in\b$ is polaroid (i.e.,
it is both left and right polaroid), then
$\iso\sigma(T)\cap\{\sigma_{UBW}(T)\cup\sigma_{LBW}(T)\}=\iso\sigma(T)\cap\sigma_{BW}(T)=\emptyset$.
In such a case, there exists an integer $d(\lambda)>0$ such that
$\rm{co-dim}((T-\lambda)\X + (T-\lambda)^{-d(\lambda)}(0))$ and
$\dim(((T-\lambda)^{-1}(0) \cap (T-\lambda)^{d(\lambda)}\X))$ are
both finite at every $\lambda\in\iso\sigma(T)$. Hence
 there exist $T$-invariant closed subspaces
$E_1$ and $E_2$ such that $\X=E_1\oplus E_2$,
$(T-\lambda)|_{E_1}$ is $d(\lambda)$-nilpotent and
$(T-\lambda)|_{E_2}$ is invertible at every
$\lambda\in\iso\sigma(T)$ ({\em cf.} \cite[Theorem 7]{Mu}). The
argument of the proof of Theorem \ref{thm1} implies the following.
\begin{cor}\label{cor1} \cite[Theorem 3]{DHK} $A$ and
$B$ polaroid implies $A\otimes B$ polaroid.\end{cor}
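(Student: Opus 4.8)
The plan is to run the proof of Theorem \ref{thm1}, replacing ``bounded below'' by ``invertible'' at every step. This is legitimate because we now work at isolated points of the \emph{whole} spectrum, where no analogue of property $(\p)$ is needed: as recorded in the paragraph preceding the corollary, a polaroid $T\in\b$ admits, at each $\nu\in\iso\sigma(T)$, a decomposition $\X=E_1\oplus E_2$ into $T$-invariant closed subspaces with $(T-\nu)|_{E_1}$ nilpotent of order $d(\nu)=\asc(T-\nu)=\dsc(T-\nu)$ and $(T-\nu)|_{E_2}$ invertible ($E_1$ and $E_2$ being the range and the kernel of the Riesz idempotent of $T$ at $\nu$; cf. \cite[Theorem 7]{Mu}). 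Since $\sigma(A\otimes B)=\sigma(A)\sigma(B)$ (\cite{I}), the same reasoning as in Theorem \ref{thm1} gives $\iso\sigma(A\otimes B)\setminus\{0\}\subseteq\iso\sigma(A)\,\iso\sigma(B)$, so it suffices to show that $A\otimes B$ is polar at each $\lambda\in\iso\sigma(A\otimes B)$; I split into $\lambda\neq0$ and $\lambda=0$.

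For $0\neq\lambda\in\iso\sigma(A\otimes B)$ I argue as in Theorem \ref{thm1}: fixing a factorization $\lambda=\mu\nu$ with $\mu\in\iso\sigma(A)$ and $\nu\in\iso\sigma(B)$, the decompositions of $\X$ at $\mu$ and of $\Y$ at $\nu$ give $\X\overline{\otimes}\Y=(M_1\overline{\otimes}N_1)\oplus(M_1\overline{\otimes}N_2)\oplus(M_2\overline{\otimes}N_1)\oplus(M_2\overline{\otimes}N_2)$ into $A\otimes B$-invariant closed subspaces. Writing $A\otimes B-\lambda(I\otimes I)=(A-\mu I)\otimes B+\mu I\otimes(B-\nu I)$ (commuting summands) and expanding the $(d_1+d_2)$-th power by the binomial theorem as there, $A\otimes B-\lambda(I\otimes I)$ is nilpotent on $M_1\overline{\otimes}N_1$; on the three remaining pieces the identity $\sigma(A\otimes B|_{M_i\overline{\otimes}N_j})=\sigma(A|_{M_i})\sigma(B|_{N_j})$ together with $\mu\notin\sigma(A|_{M_2})$ and $\nu\notin\sigma(B|_{N_2})$ makes $A\otimes B-\lambda(I\otimes I)$ invertible. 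Thus $A\otimes B-\lambda(I\otimes I)$ is the direct sum of a nilpotent and an invertible operator, so it has finite ascent and descent, i.e. $A\otimes B$ is polar at $\lambda$.

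For $\lambda=0$ I copy the case analysis of part $(ii)$ of the proof of Theorem \ref{thm1}. If $0$ lies in only one of $\sigma(A),\sigma(B)$, say $0\notin\sigma(A)$, then $A$ is invertible and $0\in\iso\sigma(B)$; from $\Y=N_1\oplus N_2$ one gets $\X\overline{\otimes}\Y=(\X\overline{\otimes}N_1)\oplus(\X\overline{\otimes}N_2)$ with $A\otimes(B|_{N_1})$ nilpotent on the first summand and $A\otimes(B|_{N_2})$ invertible on the second (as $0\notin\sigma(A)\sigma(B|_{N_2})$), so $A\otimes B$ is polar at $0$; the case $0\notin\sigma(B)$ is symmetric. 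If $0\in\iso\sigma(A)\cap\iso\sigma(B)$, the argument of the previous paragraph runs with $\mu=\nu=0$, the only change being that $A\otimes B$ is now nilpotent (not merely invertible) on $M_1\overline{\otimes}N_2$ and $M_2\overline{\otimes}N_1$ as well, since a nilpotent operator tensored with anything is nilpotent. Finally, if $0\in\acc\sigma(B)$ (resp. $0\in\acc\sigma(A)$), isolation of $0$ in $\sigma(A)\sigma(B)$ forces $\sigma(A)=\{0\}$ (resp. $\sigma(B)=\{0\}$): a nonzero $\alpha\in\sigma(A)$ together with $\beta_n\to0$, $\beta_n\in\sigma(B)\setminus\{0\}$, would put $\alpha\beta_n\to0$ in $\sigma(A\otimes B)\setminus\{0\}$; a polaroid operator with spectrum $\{0\}$ is nilpotent (finite ascent and descent at $0$, together with $\sigma=\{0\}$, collapse to $0$ the invertible summand of its decomposition), so $A\otimes B$ is nilpotent and hence polaroid.

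I expect two points to require care. First, for $\lambda\neq0$ the value $\lambda$ may admit several factorizations $\lambda=\mu\nu$ with $\mu\in\sigma(A)$, $\nu\in\sigma(B)$, and then $\sigma(A|_{M_2})\sigma(B|_{N_2})$ built from a single factorization need not avoid $\lambda$; the clean remedy is to observe that $F_\lambda=\{(\mu,\nu)\in\sigma(A)\times\sigma(B):\mu\nu=\lambda\}$ is finite (each of its members being a pair of isolated points, for otherwise $\lambda$ would not be isolated in $\sigma(A\otimes B)$), that the Riesz idempotent of $A\otimes B$ at $\lambda$ equals $\sum_{(\mu,\nu)\in F_\lambda}P^A_\mu\otimes P^B_\nu$, and that on each summand $P^A_\mu(\X)\overline{\otimes}P^B_\nu(\Y)$ of its range $A\otimes B-\lambda(I\otimes I)=(A-\mu I)\otimes B+\mu I\otimes(B-\nu I)$ is a sum of two commuting nilpotents, hence nilpotent; this again yields that $A\otimes B$ is polar at $\lambda$. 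Second, one must justify the elementary step ``polaroid and quasi-nilpotent $\Rightarrow$ nilpotent'' used in the case $\lambda=0$. Apart from this bookkeeping, the proof is a transcription of the argument of Theorem \ref{thm1}.
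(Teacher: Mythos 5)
Your proposal follows the same route as the paper: the paper's entire proof of the corollary is the remark that a polaroid $T$ admits, at each $\nu\in\iso\sigma(T)$, a decomposition into a nilpotent piece and an invertible piece ({\em cf.} \cite[Theorem 7]{Mu}), after which ``the argument of the proof of Theorem \ref{thm1} implies the corollary.'' That is exactly your plan, so on the main line you and the authors agree. What you add, and what is genuinely valuable, is the repair of two points that the paper glosses over. First, the paper's display in the proof of Theorem \ref{thm1} asserts that $(A\otimes B-\lambda(I\otimes I))^{d}$ vanishes on all three pieces $M_i\overline{\otimes}N_j$ with $(i,j)\neq(2,2)$; this is not correct for the mixed pieces $M_1\overline{\otimes}N_2$ and $M_2\overline{\otimes}N_1$ (e.g. $A=\mu$ on $M_1=\mathbb{C}$ gives $A\otimes B-\lambda=\mu(B-\nu)$ on $N_2$, which is invertible, not nilpotent), and your use of $\sigma(A\otimes B|_{M_i\overline{\otimes}N_j})=\sigma(A|_{M_i})\,\sigma(B|_{N_j})$ to conclude invertibility there is the right statement. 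Second, your worry about multiple factorizations $\lambda=\mu\nu=\mu'\nu'$ is a genuine gap in the single-factorization argument: if $\lambda$ also factors through points of $\sigma(A|_{M_2})$ and $\sigma(B|_{N_2})$, then $A\otimes B-\lambda(I\otimes I)$ need not be invertible (nor bounded below) on $M_2\overline{\otimes}N_2$, so the two-block decomposition claimed in the paper does not come for free. Your fix --- the set of factorizations of $\lambda\neq 0$ inside $\sigma(A)\times\sigma(B)$ is finite and consists of pairs of isolated points, the Riesz idempotent of $A\otimes B$ at $\lambda$ is the finite sum $\sum P^A_\mu\otimes P^B_\nu$ over these pairs, and on each block the operator is a sum of two commuting nilpotents --- is the correct way to close this for the polaroid corollary, where spectral idempotents are available (it does not transcribe directly to the left polaroid Theorem \ref{thm1}, where $\sigma_a$ has no Riesz calculus, but that is not your problem here). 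Your justification that a polaroid operator with spectrum $\{0\}$ is nilpotent is also sound. In short: same approach as the paper, executed more carefully, and the extra care is not pedantry.
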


\indent The Hilbert space version of Theorem \ref{thm1} has a
$\tau_{AB}$ analogue.\par

\begin{thm}\label{thm3} If $A\in B(\H)$ and $B^*\in B(\K)$
are left polaroid Hilbert space operators, then $\tau_{AB}$ is
left polaroid\end{thm}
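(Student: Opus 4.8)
The plan is to mimic the proof of Theorem~\ref{thm1}, replacing the tensor product $A\otimes B$ with $\tau_{AB}=L_AR_B$ and using the dictionary already established in the preliminaries: $\sigma_a(\tau_{AB})=\sigma_a(A)\sigma_a(B^*)$ and $\sigma_{ab}(\tau_{AB})=\sigma_a(A)\sigma_{ab}(B^*)\cup\sigma_{ab}(A)\sigma_a(B^*)$. First I would record that, since $\sigma_a(\tau_{AB})=\sigma_a(A)\cdot\sigma_a(B^*)$, we have $\iso\sigma_a(\tau_{AB})\setminus\{0\}\subseteq\iso\sigma_a(A)\cdot\iso\sigma_a(B^*)$, and in fact $\iso\sigma_a(\tau_{AB})\setminus\{0\}\subseteq\iso\sigma_a(A)\cdot\iso\sigma_a(B^*)\subseteq\iso\sigma_a(\tau_{AB})\cup\{0\}$, exactly as in the tensor case. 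So fix $\lambda\in\iso\sigma_a(\tau_{AB})$ and split into the cases $\lambda\neq 0$ and $\lambda=0$.

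For $\lambda\neq 0$, write $\lambda=\mu\nu$ with $0\neq\mu\in\iso\sigma_a(A)$ and $0\neq\nu\in\iso\sigma_a(B^*)$. Since $A$ is left polaroid on a Hilbert space it automatically satisfies property $(\p)$, so by Lemma~\ref{lem1} there is an $A$-invariant orthogonal-type decomposition $\H=M_1\oplus M_2$ with $(A-\mu I)^{d_1}|_{M_1}=0$ and $(A-\mu I)|_{M_2}$ bounded below; similarly $B^*$ left polaroid gives $\K=N_1^\ast\oplus N_2^\ast$ with $(B^*-\nu I)^{d_2}$ nilpotent on the first summand and bounded below on the second, which dualizes to a $B$-invariant decomposition of the predual with $(B-\bar\nu)$-behaviour of the corresponding type. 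The key point is that $\tau_{AB}$ restricted to the "block" determined by $M_i$ and $N_j$ behaves like $A|_{M_i}\otimes (B|_{N_j})$ in the relevant sense: writing $\tau_{AB}-\lambda = L_{A-\mu}R_B + \mu\, L_IR_{B-\nu}= S+T$ with $ST=TS$, the binomial expansion $(\tau_{AB}-\lambda)^d=\sum\binom{d}{k}S^kT^{d-k}$ with $d=d_1+d_2$ annihilates the block where at least one factor is nilpotent, and on the remaining block $\tau_{A|_{M_2}\,B|_{N_2}}$ one has $\lambda=\mu\nu\notin\sigma_a(A|_{M_2})\cdot\sigma_a(B^*|_{N_2^*})=\sigma_a(\tau_{A|_{M_2}B|_{N_2}})$, so that block is bounded below. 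Hence $\asc(\tau_{AB}-\lambda)\leq d<\infty$ and $(\tau_{AB}-\lambda)^{d+1}$ has closed range, i.e.\ $\tau_{AB}$ is left polar at $\lambda$.

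For $\lambda=0$ I would copy case $(ii)$ of the proof of Theorem~\ref{thm1} essentially verbatim: if $0\notin\sigma_a(A)$ then $A$ is left invertible and $0\in\iso\sigma_a(B^*)$, giving a decomposition of the underlying space along the $B$-part on which $\tau_{AB}$ is nilpotent on one summand and bounded below on the other, and symmetrically if $0\notin\sigma_a(B^*)$; if $0\in\sigma_a(A)\cap\sigma_a(B^*)$ one subdivides into $0$ isolated in both, or isolated in one and a limit point of the other, the latter forcing $\sigma_a(A)=\{0\}$ or $\sigma_a(B^*)=\{0\}$ and hence $A$ or $B^*$ nilpotent, so $\tau_{AB}$ is nilpotent and thus left polaroid.

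The main obstacle I anticipate is the bookkeeping around the operator-space structure: $B(\H)$ (or $B(\K,\H)$) does not literally decompose as a tensor product of the $M_i$ and $N_j$ pieces the way $\X\overline{\otimes}\Y$ does, so I need to justify carefully that the four "corner" subspaces $\{X : AX = $ lives in $M_i$, $XB$ reflects $N_j\}$ are $\tau_{AB}$-invariant, closed, and sum directly to $B(\K,\H)$, and that the restriction of $\tau_{AB}$ to the $(M_2,N_2)$ corner is unitarily/boundedly equivalent to $\tau_{A|_{M_2}\,B|_{N_2}}$ so that the product formula for $\sigma_a$ of an elementary operator applies. In the Hilbert space setting this is routine because the decompositions are orthogonal and $B(\K,\H)\cong \H\otimes\overline{\K}$ as a Hilbert space with $\tau_{AB}$ corresponding to $A\otimes \overline{B^*}$ in the appropriate sense; making that identification explicit is really the only new ingredient, and once it is in place the argument is a transcription of Theorem~\ref{thm1}.
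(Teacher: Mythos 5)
Your overall strategy---identify $\tau_{AB}$ with a tensor product and transcribe the proof of Theorem \ref{thm1}---is the same as the paper's, but the paper gets there much more cheaply: following \cite[Corollary 4]{DHK} it observes that $B(\K,\H)$ is an ultraprime $(B(\K),B(\H))$-bimodule, so the identity $\|L_AR_B\|=\|A\|\,\|B\|$ makes the operator norm a reasonable uniform cross-norm on $\H\otimes\K$, whence $\tau_{AB}$ \emph{is} $A\otimes B^*$ on a space of the form $\H\overline{\otimes}\K$ and the Hilbert space case of Theorem \ref{thm1} applies verbatim. Your alternative plan of re-running the argument directly on $B(\K,\H)$ is also workable, and the ``main obstacle'' you flag is smaller than you think: since $M_1$ and $M_2$ are both $A$-invariant, the (not necessarily orthogonal) idempotent $P$ with range $M_1$ and kernel $M_2$ commutes with $A$; likewise the idempotent $Q$ attached to the $B^*$-invariant decomposition of $\K$ commutes with $B^*$, hence $Q^*$ commutes with $B$. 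The four corners $X\mapsto P_iXQ_j^*$ then give closed, $\tau_{AB}$-invariant subspaces summing directly to $B(\K,\H)$; three corners are annihilated by your binomial expansion, and on the fourth one applies $\sigma_a(\tau_{A'B'})=\sigma_a(A')\sigma_a(B'^*)$ (\cite[Proposition 4.3(i)]{BDJ}) to the restrictions. On this route no tensor identification is needed at all.

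The one step you must not lean on as written is the closing justification that $B(\K,\H)\cong\H\otimes\overline{\K}$ ``as a Hilbert space'': that Hilbert-space tensor product is the Hilbert--Schmidt class, a proper, non-closed (in the operator norm) subspace of $B(\K,\H)$. Proving that $A\otimes\overline{B^*}$ is left polaroid there would only give information about the restriction of $\tau_{AB}$ to the Hilbert--Schmidt operators, whose approximate point spectrum need not coincide with that of $\tau_{AB}$ on $B(\K,\H)$. Note also that the decompositions furnished by Lemma \ref{lem1} are merely topological direct sums, not orthogonal ones. Neither issue is fatal---replace the Hilbert--Schmidt identification by the ultraprime cross-norm identification (the paper's route) or by the direct corner argument sketched above---but as stated that final paragraph is where your proof has a genuine gap.
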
\begin{demo}  To prove the  Theorm, one
argues as in the proof of \cite[Corollary 4]{DHK}: $B(B(\K),
B(\H))$ is an ultraprime Banach $(B(\K), B(\H))$ bimodule, and
hence $\tau_{AB}$ is just $A\otimes B^*$. Here the ultraprime
condition $||L_AR_B||=||A||||B||$ ensures that the operator norm
of the bimodule induces a uniform cross-norm on
$\H\overline{\otimes}\K$.\end{demo}

\markright{ \hskip5truecm \rm ENRICO BOASSO and B. P. DUGGAL}
\section {\sfstp Finitely left polaroid operators}

 Recall that upper semi-Fredholm operators have a Kato
decomposition: indeed, if $\lambda\not\in\sigma_{ab}(T)$, then
there exist $T$-invariant closed subspaces $E_1$ and $E_2$ such
that $H_0(T-\lambda)=H_0((T-\lambda)|_{E_1})=(T-\lambda)^{-d}(0)$
for some integer $d=\asc(T-\lambda)>0$, $\dim H_0(T-\lambda)= \dim
E_1 <\infty$, and $(T-\lambda)|_{E_2}$ is bounded below.
Apparently, if the operators $A$  and $B$  are finitely left
polaroid, then $A\otimes B$ is left polaroid. The finitely left
polaroid property does not transfer from $A, B$ to $A\otimes B$;
the problem, as one would expect, lies with the point $0$.\par

\begin{thm}\label{thm2}If $A$ and $B$ are finitely
left polaroid, then $A\otimes B$ is finitely left polaroid if
and only if $0\notin\iso\sigma_a(A\otimes
B)$.\end{thm}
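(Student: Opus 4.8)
The plan is to reduce the statement to the purely spectral criterion recorded in Remark~\ref{remark1} — an operator $T$ is finitely left polaroid if and only if $\sigma_{ab}(T)=\acc\,\sigma_a(T)$, and $\sigma_a(T)\setminus\sigma_{ab}(T)=\Pi_0^{\ell}(T)$ — together with Lemma~\ref{lemma1}(i) and the identity $\sigma_a(A\otimes B)=\sigma_a(A)\sigma_a(B)$ used in its proof. With these in hand the two implications split apart, and only the converse carries any real content.

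For the implication from $0\notin\iso\sigma_a(A\otimes B)$ to $A\otimes B$ being finitely left polaroid, I would invoke Lemma~\ref{lemma1}(i), which gives $\acc\,\sigma_a(A\otimes B)\subseteq\sigma_{ab}(A\otimes B)\subseteq\acc\,\sigma_a(A\otimes B)\cup\{0\}$. Thus the desired equality $\sigma_{ab}(A\otimes B)=\acc\,\sigma_a(A\otimes B)$ can fail only if $0\in\sigma_{ab}(A\otimes B)$ while $0\notin\acc\,\sigma_a(A\otimes B)$. Since $\sigma_{ab}\subseteq\sigma_a$, however, $0\in\sigma_{ab}(A\otimes B)$ forces $0\in\sigma_a(A\otimes B)$, and then the hypothesis $0\notin\iso\sigma_a(A\otimes B)$ yields $0\in\acc\,\sigma_a(A\otimes B)$, a contradiction. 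Hence $\sigma_{ab}(A\otimes B)=\acc\,\sigma_a(A\otimes B)$ and, by the criterion in Remark~\ref{remark1}, $A\otimes B$ is finitely left polaroid. This half is essentially a single line once Lemma~\ref{lemma1}(i) is granted.

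For the converse I would argue by contradiction. Assume $A\otimes B$ is finitely left polaroid, so $\sigma_{ab}(A\otimes B)=\acc\,\sigma_a(A\otimes B)$, and assume $0\in\iso\sigma_a(A\otimes B)$. Then $0\in\sigma_a(A\otimes B)\setminus\sigma_{ab}(A\otimes B)=\Pi_0^{\ell}(A\otimes B)$, so $A\otimes B$ is upper semi-Fredholm at $0$; in particular $\alpha(A\otimes B)<\infty$. Since $0\in\sigma_a(A\otimes B)=\sigma_a(A)\sigma_a(B)$ and the hypotheses are symmetric in $A$ and $B$, we may assume $0\in\sigma_a(A)$, and distinguish two cases. (a) If $0\in\acc\,\sigma_a(A)$, pick $\beta\in\sigma_a(B)\setminus\{0\}$, which is possible because a finitely left polaroid $B$ cannot satisfy $\sigma_a(B)=\{0\}$ (Remark~\ref{remark1}); then the numbers $\alpha\beta$, with $\alpha$ running over the infinitely many nonzero points of $\sigma_a(A)$ in a small disc about $0$, are distinct nonzero points of $\sigma_a(A)\sigma_a(B)=\sigma_a(A\otimes B)$ converging to $0$, so $0\in\acc\,\sigma_a(A\otimes B)$, contradicting $0\in\iso\sigma_a(A\otimes B)$. (b) If $0\in\iso\sigma_a(A)$, then $0\in\Pi_0^{\ell}(A)$ since $A$ is finitely left polaroid, so $A\X$ is closed and $\alpha(A)<\infty$; being upper semi-Fredholm $A\X$ is closed, so $0\in\sigma_a(A)$ forces $\ker A\neq 0$, i.e. $1\le\alpha(A)<\infty$, whence $\ker(A\otimes B)$ contains the algebraic tensor product $\ker A\otimes\Y$, a subspace of $\X\overline{\otimes}\Y$ of dimension $\alpha(A)\cdot\dim\Y$, which is infinite, contradicting $\alpha(A\otimes B)<\infty$. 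Both cases being impossible, $0\notin\iso\sigma_a(A\otimes B)$.

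The step I expect to be the crux is case (b) of the converse: turning the fact that $0$ is isolated in $\sigma_a(A\otimes B)$ into a failure of the upper semi-Fredholm property of $A\otimes B$. Case (a) is a soft point-set computation with products of spectra, but case (b) genuinely uses the infinite dimensionality of the underlying Banach spaces — a single nonzero vector of $\ker A$ inflates to an infinite-dimensional kernel of $A\otimes B$ — and one must be a little careful that $\ker A\otimes\Y$ does embed in $\X\overline{\otimes}\Y$ with the claimed dimension, which holds precisely because $\ker A$ is finite dimensional.
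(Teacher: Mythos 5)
Your proof is correct, and it reaches the conclusion by a route that differs from the paper's at the decisive point. The paper's proof is a pointwise verification: for $0\neq\lambda\in\iso\sigma_a(A\otimes B)$ it factors $\lambda=\mu\nu$ with $\mu\in\Pi_0^{\ell}(A)$ and $\nu\in\Pi_0^{\ell}(B)$ and reads off $\lambda\notin\sigma_{ab}(A\otimes B)$ from the product formula for $\sigma_{ab}(A\otimes B)$, while at $\lambda=0$ it invokes the Kato-type direct sum decompositions of $\X$ and $\Y$ constructed in the proof of Theorem \ref{thm1} to conclude that $A\otimes B$ is left polar at $0$ but with $\alpha(A\otimes B)=\infty$. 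Your forward implication is instead the one-line deduction from Lemma \ref{lemma1}(i) and Remark \ref{remark1}(iii) --- this coincides with the paper's own alternative argument in the first half of Remark \ref{rema111}. Your converse replaces the appeal to the decomposition at $0$ by the dichotomy $0\in\acc\,\sigma_a(A)$ versus $0\in\iso\sigma_a(A)$ (after the legitimate reduction, via $\sigma_a(A\otimes B)=\sigma_a(A)\sigma_a(B)$, to $0\in\sigma_a(A)$): the first branch is a soft accumulation computation, and the second exhibits the infinite-dimensional subspace $\ker A\otimes\Y$ inside $\ker(A\otimes B)$. What your version buys is that the crux --- $\alpha(A\otimes B)=\infty$ whenever $0\in\sigma_a(A\otimes B)$ --- is made explicit and elementary, with no appeal to M\"{u}ller's decomposition theorem; it is in effect a hands-on proof of the fact cited in Remark \ref{rema11} from \cite[Lemma 4]{DDK} that $0\in\sigma_a(A\otimes B)$ forces $0\in\sigma_{SF_+}(A\otimes B)$. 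The one hypothesis you use silently is that $\Y$ is infinite dimensional, but this is a standing (implicit) assumption of the paper as well, since it already underlies the assertion $\sigma_{ab}(B)\neq\emptyset$ in Remark \ref{remark1}.
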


\begin{demo} Recall, \cite[Lemma 5]{DDK}, that
$$\sigma_{ab}(A\otimes B)=\sigma_{ab}(A)\sigma_a(B) \cup
\sigma_a(A)\sigma_{ab}(B).$$ \noindent Suppose that
$0\neq\lambda\notin\iso\sigma_a(A\otimes B)$. Then there exist
non-zero $\mu\in\iso\sigma_a(A)$ and $\nu\in\iso\sigma_a(B)$ such
that $\mu\nu=\lambda$. If $A$ and $B$ are finitely left polaroid,
then $\mu\notin\sigma_{ab}(A)$ and $\nu\notin\sigma_{ab}(B)$.
Hence
$$\lambda\notin\sigma_{ab}(A\otimes B)\Longleftrightarrow
\lambda\in\Pi^l_0(A\otimes B).$$

\indent Now let $\lambda=0$. Since the
finitely left polaroid hypothesis on $A$ (respectively, $B$) implies that
$\X$ (respectively, $\Y$) has a direct sum decomposition of the type
considered in the proof of Theorem \ref{thm1} whenever
$0\in\iso\sigma_a(A)$ (respectively, $0\in\iso\sigma_a(B)$), it follows
from the argument of the proof of Theorem \ref{thm1}, see $(ii)$
of the proof, that $A\otimes B$ is left polaroid at $0$, with
$\alpha(A\otimes B)=\infty$. Hence $A\otimes B$ is not finitely
left polaroid at $0$.\end{demo}

\begin{rema}\label{rema11}{\em The upper semi-Fredholm spectrum
$\sigma_{SF_+}(T)$ of an operator $T$ satisfies the inclusion
$\sigma_{SF_+}(T)\subseteq \sigma_{ab}(T)$. Since
$0\notin\sigma_a(T)\setminus\sigma_{SF_+}(T)$ for every operator
$T$ (\cite[Lemma 4]{DDK}), $0\notin\sigma_a(A\otimes
B)\setminus\sigma_{ab}(A\otimes B)=\Pi_0^{\ell}(A\otimes B)$: this
provides an alternative proof of a part of Theorem
\ref{thm2}.}\end{rema}

\begin{rema}\label{rema111}\rm If the hypotheses of Theorem \ref{thm2} are satisfied,
then to prove Theorem  \ref{thm2} it is enough to consider the
case $0\in \sigma_a(A\otimes B)$; see Lemma \ref{lemma1}(i) and
Remark \ref{remark1}(iii). Observe that if $0\notin\iso
\sigma_a(A\otimes B)$, then $A\otimes B$ is finitely left
polaroid (by  Lemma \ref{lemma1}(i) and Remark
\ref{remark1}(iii)). If, instead, $A\otimes B$ is finitely left
polaroid and $0\in\iso \sigma_a(A\otimes B) \subseteq \iso
\sigma_a(A)\iso \sigma_a (B)=\Pi_0^l(A)\Pi_0^l(B)$, then
$0\in\Pi^{\ell}_0(A)$ or $0\in\Pi^{\ell}_0(B)$. However, if
$0\in\Pi_0^l(A)$ (respectively, $0\in\Pi^{\ell}_0(B)$), then,
since acc $\sigma_a(B)=\sigma_{ab}(B)\neq\emptyset$
(respectively, $\rm{acc }$ $\sigma_a(A)=\sigma_{ab}(A)\neq\emptyset$)
 and $\sigma_a(A\otimes B)=\sigma_a(A)\sigma_a(B)$, $0\in$ acc
$\sigma_a(A\otimes B)$, which is a contradiction. This provides
yet another  proof of Theorem \ref{thm2}.
\end{rema}

\indent The following remark is a supplement to the conclusions
of Theorem \ref{thm2}. In fact, given $A$ and $B$ two finitely
left polaroid operators, $\sigma_a( A\otimes B)$ will be fully
described in terms of the  Browder essential approximate point
spectrum and the set of finite left poles of the operators $A$
and $B$. Observe that  Remark \ref{remark101} describes
$\sigma_{ab}(A\otimes B)$  for finitely left polaroid operators
$A$ and $B$ .
\par
\markright{\hskip4.2truecm \rm Tensor product of  left polaroid operators }

\begin{rema}\label{rema12} \rm Let, as in Theorem \ref{thm2},  $A$ and $B$ be two finitely  left polaroid operators.\par
\noindent (i) If $0\notin \sigma_a(A)\cdot
\sigma_a(B)=\sigma_a(A\otimes B)$, then according to Theorem
\ref{thm2} and Lemma \ref{lemma1}(ii), acc $\sigma_a (A\otimes
B)=\sigma_{ab} (A\otimes B)$ and $\Pi_0^l(A\otimes B)=
\hbox{iso}\hskip,1truecm\sigma_a(A\otimes B)=
\Pi_0^l(A)\cdot\Pi_0^l(B)$. Same conclusions can be derived when
$0\in \hbox{acc}\hskip.1truecm \sigma_a(A)\setminus
\hbox{iso}\hskip.1truecm \sigma_a(B)$ or $0\in
\hbox{acc}\hskip.1truecm \sigma_a(B)\setminus
\hbox{iso}\hskip.1truecm \sigma_a(A)$.

\

\noindent (ii) If $0\in  \hbox{acc}\hskip.1truecm \sigma_a(A)\cap
\hbox{ iso}\hskip.1truecm \sigma_a(B)$, then according to the
last observation in Remark \ref{remark1} and Lemma
\ref{lemma1}(i),  acc $\sigma (A\otimes B)=\sigma_{ab}(A\otimes
B)$. In addition, according to Lemma \ref{lemma1}(ii),  $\iso
\sigma_a (A\otimes B)=\Pi_0^l(A\otimes
B)=\Pi_0^l(A)\cdot(\Pi_0^l(B)\setminus \{0 \})$. Similarly, if
$0\in  \hbox{acc}\hskip.1truecm \sigma_a(B)\cap \hbox{
iso}\hskip.1truecm \sigma_a(A)$, then $\iso \sigma_a (A\otimes
B)=\Pi_0^l(A\otimes B)=(\Pi_0^l(A)\setminus \{0
\})\cdot\Pi_0^l(B)$).

\

\noindent (iii) If  $0\in  \hbox{iso}\hskip.1truecm \sigma_a(A)$  and $0\notin\sigma_a(B)$, then
since $\sigma_a(A)\cdot \sigma_a(B)=\sigma_a(A\otimes B)$, a standard argument on convergent subsequences
proves that $0\in \iso \sigma_a (A\otimes B)$. Consequently, according to Lemma \ref{lemma1}(i)-(ii) and \cite[Lemma 5]{DDK},
 $\sigma_ {ab}(A\otimes B)= \hbox{acc}\hskip.1truecm \sigma_a(A\otimes B)\cup \{0\}$,
$I_0^a(A\otimes B)= \{0\}$, $\Pi_0^l(A\otimes B)=(\Pi_0^l(A)\setminus \{0\})\cdot \Pi_0^l(B)$,
$ \hbox{iso}\hskip.1truecm \sigma_a(A\otimes B)=\Pi_0^l(A)\cdot
\Pi_0^l(B)$ and $\hbox{acc}\hskip.1truecm \sigma_a(A\otimes B)=
 \sigma_{ab}(A)\cdot \sigma_{ab}(B)\cup  \sigma_{ab}(A)\cdot \Pi_0^l(B) \cup  (\Pi_0^l(A)\setminus \{0\})\cdot
 \sigma_{ab}(B)$.

 \

\noindent (iv) If $0\in  \hbox{iso}\hskip.1truecm \sigma_a(A)\cap  \hbox{iso}\hskip.1truecm \sigma_a(B)$, then
an argument similar to the one in (iii) proves that $ \hbox{iso}\hskip.1truecm \sigma_a(A\otimes B)=\Pi_0^l(A)\cdot
\Pi_0^l(B)$, $\Pi_0^l(A\otimes B)=(\Pi_0^l(A)\setminus \{0\})\cdot (\Pi_0^l(B)\setminus \{0\})$, $I_0^a(A\otimes B)= \{0\}$,
$\sigma_{ab}(A\otimes B)= \hbox{acc}\hskip.1truecm \sigma_a(A\otimes B)\cup \{0\}$ and
acc $\sigma_a(A\otimes B)=
 \sigma_{ab}(A)\cdot \sigma_{ab}(B)\cup
 \sigma_{ab}(A)\cdot (\Pi_0^l(B)\setminus \{0\})\cup
(\Pi_0^l(A)\setminus \{0\})\cdot
 \sigma_{ab}(B)$.\par
 \indent Note that the transfer property for finitely left polaroid operators
holds in (i) and (ii).
\end{rema}

\indent We consider next the elementary operator $\tau_{AB}$
(where, as before, $A\in\b$ and $B\in\B$).\par

\begin{thm}\label{thm31}  If $A$ and $B^*$ are
finitely left polaroid operators, then $\tau_{AB}$ is finitely
left polaroid if and only if
$0\notin\iso\sigma_a(\tau_{AB})$.\end{thm}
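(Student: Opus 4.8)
\indent The plan is to follow the proof of Theorem~\ref{thm2}, replacing the tensor identities of \cite[Lemma~5]{DDK} by their $\tau_{AB}$-analogues $\sigma_a(\tau_{AB})=\sigma_a(A)\sigma_a(B^*)$ and $\sigma_{ab}(\tau_{AB})=\sigma_a(A)\sigma_{ab}(B^*)\cup\sigma_{ab}(A)\sigma_a(B^*)$ from \cite[Proposition~4.3~(i),(iv)]{BDJ}, together with Lemma~\ref{lemma1}(iii). Since $A$ and $B^*$ are finitely left polaroid, $\tau_{AB}$ is finitely left polaroid if and only if $\sigma_{ab}(\tau_{AB})=\acc\sigma_a(\tau_{AB})$ (Remark~\ref{remark1}(iii)), and Lemma~\ref{lemma1}(iii) already gives $\acc\sigma_a(\tau_{AB})\subseteq\sigma_{ab}(\tau_{AB})\subseteq\acc\sigma_a(\tau_{AB})\cup\{0\}$; thus the only thing that can go wrong is the point $0$, and the displayed equality fails precisely when $0\in\sigma_{ab}(\tau_{AB})$ while $0\notin\acc\sigma_a(\tau_{AB})$, that is (since $\sigma_{ab}\subseteq\sigma_a$) precisely when $0\in\iso\sigma_a(\tau_{AB})$ and $0\in\sigma_{ab}(\tau_{AB})$. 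Consequently it is enough to prove the single implication $0\in\iso\sigma_a(\tau_{AB})\Rightarrow 0\in\sigma_{ab}(\tau_{AB})$, for then $\sigma_{ab}(\tau_{AB})=\acc\sigma_a(\tau_{AB})$ holds exactly when $0\notin\iso\sigma_a(\tau_{AB})$, which is the assertion.

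\indent To prove that implication, assume $0\in\iso\sigma_a(\tau_{AB})$. From $0\in\sigma_a(\tau_{AB})=\sigma_a(A)\sigma_a(B^*)$ we get $0\in\sigma_a(A)$ or $0\in\sigma_a(B^*)$; consider first $0\in\sigma_a(A)$. I would first show $0\in\iso\sigma_a(A)$ by a convergent-subsequence argument: if $0$ were a limit point of $\sigma_a(A)$, choose distinct $s_n\in\sigma_a(A)$ with $s_n\to0$ and a nonzero $t_0\in\sigma_a(B^*)$ (one exists, since $\sigma_a(B^*)=\{0\}$ would force $\emptyset=\acc\sigma_a(B^*)=\sigma_{ab}(B^*)\neq\emptyset$ for the finitely left polaroid $B^*$, by the last paragraph of Remark~\ref{remark1}); then the distinct points $s_nt_0\in\sigma_a(\tau_{AB})$ tend to $0$, against $0\in\iso\sigma_a(\tau_{AB})$. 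Hence $0\in\iso\sigma_a(A)=\Pi_0^l(A)$, so $A$ is upper semi-Fredholm with closed range; not being bounded below, $A$ has a nonzero (necessarily finite-dimensional) kernel. Fixing $0\neq e\in\ker A$, each rank-one operator $y\mapsto f(y)e$, $f\in\Y^{*}$, lies in $\ker\tau_{AB}$, and since $\dim\Y^{*}=\infty$ (the finitely left polaroid $B^*$ has infinite $\sigma_a(B^*)$, so $\dim\Y=\infty$) we obtain $\alpha(\tau_{AB})=\infty$; thus $\tau_{AB}$ is not upper semi-Fredholm at $0$, i.e. $0\in\sigma_{SF_+}(\tau_{AB})\subseteq\sigma_{ab}(\tau_{AB})$. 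The case $0\in\sigma_a(B^*)$ is symmetric: by the same convergent-subsequence argument (now using $\sigma_a(A)\neq\{0\}$) one gets $0\in\iso\sigma_a(B^*)=\Pi_0^l(B^*)$, so $B^*$, hence $B$, is upper semi-Fredholm; then $B\Y$ is closed of finite, positive codimension, and choosing $0\neq g\in\Y^{*}$ that annihilates $B\Y$, the operators $y\mapsto g(y)x$, $x\in\X$, lie in $\ker\tau_{AB}$, so $\alpha(\tau_{AB})\geq\dim\X=\infty$ (again $\dim\X=\infty$ as $\sigma_a(A)$ is infinite) and once more $0\in\sigma_{ab}(\tau_{AB})$.

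\indent The step I expect to be the real obstacle is this last one: deducing $0\in\sigma_{SF_+}(\tau_{AB})$ from $0\in\iso\sigma_a(\tau_{AB})$, i.e. ruling out that $\tau_{AB}$ is upper semi-Fredholm at $0$ even though $0$ belongs to $\sigma_a(A)$ or to $\sigma_a(B^*)$. This is where the \emph{finitely} left polaroid hypotheses (rather than merely left polaroid) are genuinely used: they force $0$, once isolated in $\sigma_a(\tau_{AB})$, to be a finite left pole of the relevant factor $A$ or $B^*$, so that that factor is upper semi-Fredholm with a nonzero finite-dimensional kernel (respectively cokernel), which is exactly what makes the rank-one construction yield an infinite-dimensional $\ker\tau_{AB}$. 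The same step could instead be settled in one line by quoting the description of $\sigma_{SF_+}(\tau_{AB})$ from \cite{BDJ}, which gives $0\in\sigma_a(\tau_{AB})\Rightarrow 0\in\sigma_{SF_+}(\tau_{AB})\subseteq\sigma_{ab}(\tau_{AB})$ directly, in the spirit of Remark~\ref{rema11}.
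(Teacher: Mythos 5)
Your proof is correct and follows essentially the same route as the paper: the paper treats nonzero isolated points via the product formula for $\sigma_{ab}(\tau_{AB})$ and disposes of the point $0$ exactly as in Remark \ref{rema11}, by showing that $0\in\sigma_a(\tau_{AB})$ forces $0\in\sigma_{SF_+}(\tau_{AB})\subseteq\sigma_{ab}(\tau_{AB})$ --- your rank-one construction merely supplies the proof of that fact which the paper delegates to a citation, and your reduction via Remark \ref{remark1}(iii) and Lemma \ref{lemma1}(iii) is the alternative argument the paper itself sketches in Remark \ref{rema111}. (One cosmetic slip: $B^*$ upper semi-Fredholm makes $B$ \emph{lower} semi-Fredholm, not upper, but what you actually use --- that $B\Y$ is closed of finite, positive codimension --- is correct.)
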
\begin{demo}  Recall
that $\sigma_a(\tau_{AB})=\sigma_a(A)\sigma_a(B^*)$ and
$\sigma_{ab}(\tau_{AB})=\sigma_{ab}(A)\sigma_a(B^*)\cup\sigma_a(A)\sigma_{ab}(B^*)$
\cite[Proposition 4.1]{BDJ}. Now argue as in the proof of Theorem
\ref{thm2} to prove that $\tau_{AB}$ is finitely left polaroid at
every non-zero $\lambda\in\iso\sigma_a(\tau_{AB})$, and as in
Remark \ref{rema11} to prove that $\tau_{AB}$ is not finitely left
polaroid at $0\in\iso\sigma_a(\tau_{AB})$.\end{demo}

\indent Apparently,  an  alternative proof of Theorem \ref{thm31}
is obtained from an argument similar to the one in Remark
\ref{rema111}. Furthermore,  arguing  just as for the operator
$A\otimes B$ in Remark \ref{rema12}, it is possible to obtain a
complete characterization of the sets
 $\sigma_a (\ST)$, acc $\sigma_a
(\ST)$, $\iso \sigma_a (\ST)$, $I_0^a(\ST)$ and $\Pi_a^l(\ST)$,
 in terms of the corresponding sets for
$A$ and $B^*$. The details are left to the reader.\par

\medskip

\indent We end this section by studying perturbations of finitely
left polaroid operators by quasi-nilpotents.\par

\indent If $Q_1\in\b$ and $Q_2\in\B$ are quasi-nilpotents which
commute with $A\in\b$ and $B\in\B$ respectively, then
$(A+Q_1)\otimes (B+Q_2)= (A\otimes B)+Q$, where $Q=A\otimes Q_2 +
Q_1\otimes B + Q_1\otimes Q_2$ is a quasi-nilpotent which commutes
with $A\otimes B$. Since
$$\sigma_a((A+Q_1)\otimes
(B+Q_2))=\sigma_a(A+Q_1)\sigma_a(B+Q_2)=\sigma_a(A)\sigma_a(B)
\hspace{2mm}\mbox{and}$$
\begin{eqnarray*} &  & \sigma_{ab}((A+Q_1)\otimes
(B+Q_2))=\sigma_{ab}(A+Q_1)\sigma_a(B+Q_2)\cup
\sigma_a(A+Q_1)\sigma_{ab}(B+Q_2)\\ & = &
\sigma_{ab}(A)\sigma_a(B)\cup\sigma_a(A)\sigma_{ab}(B),\end{eqnarray*}
$A$ and $B$ finitely left polaroid implies $(A+Q_1)\otimes
(B+Q_2)$ finitely left polaroid at every
$0\neq\lambda\in\iso\sigma_a((A+Q_1)\otimes (B+Q_2))$.
Furthermore, since $A\otimes B=(A+Q_1)\otimes (B+Q_2)-Q$, $Q$ as
above, we have: \begin{cor}\label{cor2} If $A\in\b$ and $B\in\B$
are finitely left polaroid, and  $Q_1\in\b$ and $Q_2\in\B$ are
quasi-nilpotents which commute with $A$ and $B$ respectively, then
$(A+Q_1)\otimes (B+Q_2)$ is finitely left polaroid if and only if
$0\notin\iso\sigma_a((A+Q_1)\otimes (B+Q_2))$.\end{cor}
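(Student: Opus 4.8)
The plan is to reduce Corollary \ref{cor2} to Theorem \ref{thm2} by exploiting the identity $(A+Q_1)\otimes(B+Q_2)=(A\otimes B)+Q$, where $Q=A\otimes Q_2+Q_1\otimes B+Q_1\otimes Q_2$ commutes with $A\otimes B$ and is quasi-nilpotent (the commutation and quasi-nilpotency are immediate from the commutation hypotheses and the fact that a sum of commuting quasi-nilpotents is quasi-nilpotent). So the real content is: \emph{adding a commuting quasi-nilpotent to a finitely left polaroid operator preserves the finitely left polaroid property, provided the point $0$ causes no trouble}, and one must also verify that the ``$0$-obstruction'' $0\notin\iso\sigma_a(\cdot)$ is insensitive to such a perturbation.

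First I would record the spectral invariance. Since $Q_1$ commutes with $A$ and is quasi-nilpotent, $\sigma_a(A+Q_1)=\sigma_a(A)$ and $\sigma_{ab}(A+Q_1)=\sigma_{ab}(A)$ (commuting quasi-nilpotent perturbations leave the approximate point spectrum and the Browder essential approximate point spectrum unchanged); likewise for $B+Q_2$. Combining this with $\sigma_a(S\otimes T)=\sigma_a(S)\sigma_a(T)$ (\cite[Theorem 4.4]{I}) and \cite[Lemma 5]{DDK} for $\sigma_{ab}$ of a tensor product — exactly the two displays written just before the corollary statement — gives $\sigma_a((A+Q_1)\otimes(B+Q_2))=\sigma_a(A\otimes B)$ and $\sigma_{ab}((A+Q_1)\otimes(B+Q_2))=\sigma_{ab}(A\otimes B)$. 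In particular $\iso\sigma_a((A+Q_1)\otimes(B+Q_2))=\iso\sigma_a(A\otimes B)$, so the stated condition $0\notin\iso\sigma_a((A+Q_1)\otimes(B+Q_2))$ is equivalent to $0\notin\iso\sigma_a(A\otimes B)$.

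Next I would observe that $A+Q_1$ and $B+Q_2$ are themselves finitely left polaroid. Indeed, $S$ is finitely left polaroid iff $\iso\sigma_a(S)=\Pi_0^\ell(S)$, equivalently (Remark \ref{remark1}(iii)) iff $\sigma_{ab}(S)=\acc\,\sigma_a(S)$; since both $\sigma_a$ and $\sigma_{ab}$ are unchanged under the commuting quasi-nilpotent perturbation, $A+Q_1$ inherits $\sigma_{ab}(A+Q_1)=\sigma_{ab}(A)=\acc\,\sigma_a(A)=\acc\,\sigma_a(A+Q_1)$, hence is finitely left polaroid, and similarly $B+Q_2$. Now Theorem \ref{thm2}, applied to the finitely left polaroid pair $A+Q_1,\ B+Q_2$, yields that $(A+Q_1)\otimes(B+Q_2)$ is finitely left polaroid iff $0\notin\iso\sigma_a((A+Q_1)\otimes(B+Q_2))$, and by the previous paragraph the latter is equivalent to $0\notin\iso\sigma_a(A\otimes B)$. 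This is precisely the assertion of Corollary \ref{cor2}. (Alternatively, one can keep $A\otimes B$ as the base operator and cite the fact that $(A\otimes B)+Q$ is finitely left polaroid iff $A\otimes B$ is, again because $Q$ is a commuting quasi-nilpotent and finite left polaroidness is a $\sigma_a$/$\sigma_{ab}$ condition — this is the route the excerpt hints at with ``$A\otimes B=(A+Q_1)\otimes(B+Q_2)-Q$''.)

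The main obstacle, such as it is, is purely bookkeeping: being sure that $Q$ genuinely commutes with $A\otimes B$ and is quasi-nilpotent (so that the spectral identities one wants to cite actually apply in the tensor-product space $\X\overline{\otimes}\Y$), and being sure that the quasi-nilpotent-perturbation invariance of $\sigma_a$ and $\sigma_{ab}$ is available in the Banach-space generality used here. Both are standard — the commutation of $Q$ with $A\otimes B$ follows termwise from $[A,Q_1]=[B,Q_2]=0$, quasi-nilpotency of $Q$ from the fact that a finite sum of mutually commuting quasi-nilpotents is quasi-nilpotent, and the invariance of $\sigma_a,\sigma_{ab}$ under commuting quasi-nilpotent perturbations is classical — so there is no real difficulty; the proof is essentially the two displays preceding the statement followed by an invocation of Theorem \ref{thm2}.
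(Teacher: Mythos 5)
Your proof is correct and follows essentially the same route as the paper: both rest on the identity $(A+Q_1)\otimes(B+Q_2)=(A\otimes B)+Q$ with $Q$ a commuting quasi-nilpotent, on the invariance of $\sigma_a$ and $\sigma_{ab}$ under commuting quasi-nilpotent perturbations, and on a reduction to Theorem \ref{thm2}. Your observation that $A+Q_1$ and $B+Q_2$ are themselves finitely left polaroid (via Remark \ref{remark1}) lets you invoke Theorem \ref{thm2} verbatim, which is just a slightly tidier packaging of the argument the paper gives in the paragraph preceding the corollary.
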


\markright{ \hskip5truecm \rm ENRICO BOASSO and B. P. DUGGAL}
\section {\sfstp An application}\setcounter{df}{0}

 For an operator
$T\in\b$, let $E^a(T)=\{\lambda\in\iso\sigma_a(T):
0<\alpha(T-\lambda)\}$ and $E^a_0(T)=\{\lambda\in E^a(T):
\alpha(T-\lambda)<\infty\}$. Recall that  $T$ is said to satisfy
$a$-Browder's theorem, $a$-Bt for short (respectively, generalized
$a$-Browder's theorem, $a$-gBt for short) if
$\sigma_a(T)\setminus\sigma_{aw}(T)=\Pi_0^l(T)$ (respectively,
$\sigma_a(T)\setminus\sigma_{UBW}(T)=\Pi^{\ell}(T)$). The
following equivalence is well known \cite[Theorem 2.2]{AZ}: {\em
$T$ satisfies $a$-Bt if and only if $T$ satisfies $a$-gBt}. $T$
satisfies $a$-Weyl's theorem, $a$-Wt for short (respectively,
generalized $a$-Weyl's theorem, $a$-gWt for short) if
$\sigma_a(T)\setminus\sigma_{aw}(T)=E_0^a(T)$ (respectively,
$\sigma_a(T)\setminus\sigma_{UBW}(T)=E^a(T)$). The following one
way implication holds: {\em $T$ satisfies $a$-gWt implies $T$
satisfies $a$-Wt}. Next generalized $a$-Weyl's theorem for $A\otimes B$
will be studied under the assumption $A$ and $B$ (finitely) left polaroid.\par
\markright{\hskip4.2truecm \rm Tensor product of  left polaroid operators }
\begin{thm}\label{thm4} Suppose that $A\in\b$
and $B\in\B$ satisfy $a$-Bt. If (i) $A, B$ are finitely left
polaroid, or (ii) $\X, \Y$ are Hilbert spaces and $A, B$ are left
polaroid, then $A\otimes B$ satisfies $a$-gWt if and only if
$\sigma_{aw}(A\otimes
B)=\sigma_{aw}(A)\sigma_a(B)\cup\sigma_a(A)\sigma_{aw}(B)$.\end{thm}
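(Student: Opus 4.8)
The plan is to reduce the assertion to $a$-Browder's theorem by exploiting the left polaroid property of $A\otimes B$. First I would record that under either hypothesis $A\otimes B$ is left polaroid: in case (ii) this is precisely Theorem \ref{thm1}, while in case (i) it is the observation preceding Theorem \ref{thm2} (finitely left polaroid operators are left polaroid, and the left polaroid property then transfers to the tensor product).

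Next I would establish a general fact valid for any $T\in\b$: $\Pi^{\ell}(T)\subseteq E^a(T)\subseteq\iso\sigma_a(T)$. The second inclusion is immediate from the definition of $E^a(T)$; for the first, a left pole of order $d$ must satisfy $d\geq 1$ --- otherwise $T-\lambda$ would be injective with closed range, hence bounded below, contradicting $\lambda\in\sigma_a(T)$ --- so $0<\alpha(T-\lambda)$, while $\lambda\in\iso\sigma_a(T)$ is automatic for left poles. When $T$ is left polaroid one has $\iso\sigma_a(T)=\Pi^{\ell}(T)$, so the three sets coincide; consequently the defining equalities of $a$-gWt and of $a$-gBt for $T$ become identical, and combining this with the equivalence $a$-gBt $\Longleftrightarrow$ $a$-Bt (\cite[Theorem 2.2]{AZ}) I obtain that a left polaroid operator satisfies $a$-gWt if and only if it satisfies $a$-Bt.

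Applying this to $T=A\otimes B$, the operator $A\otimes B$ satisfies $a$-gWt if and only if $\sigma_a(A\otimes B)\setminus\sigma_{aw}(A\otimes B)=\Pi_0^{\ell}(A\otimes B)$; since $\Pi_0^{\ell}(A\otimes B)=\sigma_a(A\otimes B)\setminus\sigma_{ab}(A\otimes B)$ by Remark \ref{remark1} and both $\sigma_{aw}(A\otimes B)$ and $\sigma_{ab}(A\otimes B)$ are contained in $\sigma_a(A\otimes B)$, this is equivalent to $\sigma_{aw}(A\otimes B)=\sigma_{ab}(A\otimes B)$. Finally, the hypothesis that $A$ and $B$ satisfy $a$-Bt yields, by the same ``equal complements inside $\sigma_a$'' argument, $\sigma_{aw}(A)=\sigma_{ab}(A)$ and $\sigma_{aw}(B)=\sigma_{ab}(B)$; substituting these into $\sigma_{ab}(A\otimes B)=\sigma_{ab}(A)\sigma_a(B)\cup\sigma_a(A)\sigma_{ab}(B)$ (\cite[Lemma 5]{DDK}) rewrites the condition as $\sigma_{aw}(A\otimes B)=\sigma_{aw}(A)\sigma_a(B)\cup\sigma_a(A)\sigma_{aw}(B)$, which is the desired equivalence.

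The step I expect to require the most care is the identity $E^a(T)=\Pi^{\ell}(T)$ for left polaroid $T$, and within it the elementary but essential point that a left pole has order at least $1$, so that its nullity is strictly positive; everything else is bookkeeping with the spectral identities $\sigma_{ab}(A\otimes B)=\sigma_{ab}(A)\sigma_a(B)\cup\sigma_a(A)\sigma_{ab}(B)$ and $\sigma_a(T)\setminus\sigma_{ab}(T)=\Pi_0^{\ell}(T)$ already recorded in Section 2. A secondary point to double-check is that hypothesis (i) delivers left polaroid $A\otimes B$ without invoking property $(\p)$, which is guaranteed by the observation preceding Theorem \ref{thm2}.
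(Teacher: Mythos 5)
Your proposal is correct and follows essentially the same route as the paper: both reduce $a$-gWt to $a$-Bt by using the left polaroid property of $A\otimes B$ to force $E^a(A\otimes B)=\Pi^{\ell}(A\otimes B)$ (you make explicit the inclusion $\Pi^{\ell}(T)\subseteq E^a(T)$, including the point that a left pole has order at least $1$, which the paper uses tacitly). The only cosmetic difference is that where the paper cites \cite[Theorem 1]{DDK} for the equivalence of $a$-Bt for $A\otimes B$ with the displayed formula for $\sigma_{aw}(A\otimes B)$, you rederive it from \cite[Lemma 5]{DDK} together with $\Pi_0^{\ell}(T)=\sigma_a(T)\setminus\sigma_{ab}(T)$; both are valid.
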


\begin{demo} If $A$ and $B$ satisfy $a$-Bt, then $A\otimes B$
satisfies $a$-Bt (consequently, also $a$-gBt) if and only if
$\sigma_{aw}(A\otimes
B)=\sigma_{aw}(A)\sigma_a(B)\cup\sigma_a(A)\sigma_{aw}(B)$
(\cite[Theorem 1]{DDK}). Thus $\sigma_a(A\otimes
B)\setminus\sigma_{UBW}(A\otimes B)=\Pi^{\ell}(A\otimes
B)\subseteq E^a(A\otimes B)$. Since either of the hypotheses
$(i)$ and $(ii)$ of the statement of the theorem implies
$A\otimes B$ is left polaroid, $E^a(A\otimes B)\subseteq
\Pi^{\ell}(A\otimes B)$. Hence $A\otimes B$ satisfies $a$-gWt.
The necessity being obvious from the implications $A\otimes B$
satisfies $a$-gWt implies $A\otimes B$ satisfies $a$-gBt implies
$A\otimes B$ satisfies $a$-Bt, the proof is complete.\end{demo}

\indent The finite left polaroid requirement in Theorem \ref{thm4} may be
relaxed in the case in which $A^*$ has SVEP on $\Pi^{\ell}(A)$
and $B^*$ has SVEP on
$\Pi^{\ell}(B)$.\par

\begin{thm}\label{thm51}Suppose that $A\in\b$ and
$B\in\B$ satisfy $a$-Bt. If $A^*$ has SVEP at points not in
$\sigma_{UBW}(A)$, $B^*$ has SVEP  at points not in
$\sigma_{UBW}(B)$ and $A, B$ are left polaroid, then $A\otimes B$
satisfies $a$-gWt if and only if $\sigma_{aw}(A\otimes
B)=\sigma_{aw}(A)\sigma_a(B)\cup\sigma_a(A)\sigma_{aw}(B)$.\end{thm}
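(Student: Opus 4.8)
The plan is to reduce Theorem~\ref{thm51} to Theorem~\ref{thm4}(ii) by showing that, under the SVEP hypotheses, the operators $A$ and $B$ are in fact polaroid (not merely left polaroid), and then to invoke the machinery already assembled for the left polaroid Hilbert space case --- except that here $\X,\Y$ need not be Hilbert spaces, so one must retrace the argument of Theorem~\ref{thm4} keeping track of exactly where the Hilbert-space or finite-left-polaroid assumption was used. Concretely, recall from the Preliminaries that if $T^*$ has SVEP at points $\lambda\notin\sigma_{UBW}(T)$, then $\sigma_{UBW}(T)=\sigma_{BW}(T)$, and consequently $\Pi^{\ell}(T)=\Pi(T)=\{\lambda\in\iso\sigma(T): T \text{ is polar at }\lambda\}$; so the hypothesis on $A^*$ forces $A$ to be polaroid, and likewise for $B$.

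First I would record that $A$ and $B$ satisfying $a$-Bt implies, via \cite[Theorem 1]{DDK}, the chain: $A\otimes B$ satisfies $a$-Bt (equivalently $a$-gBt) if and only if $\sigma_{aw}(A\otimes B)=\sigma_{aw}(A)\sigma_a(B)\cup\sigma_a(A)\sigma_{aw}(B)$. This is exactly as in the proof of Theorem~\ref{thm4} and requires nothing about Hilbert spaces. Under this equality we then have $\sigma_a(A\otimes B)\setminus\sigma_{UBW}(A\otimes B)=\Pi^{\ell}(A\otimes B)\subseteq E^a(A\otimes B)$, the inclusion being automatic. So the whole content is the reverse inclusion $E^a(A\otimes B)\subseteq\Pi^{\ell}(A\otimes B)$, i.e.\ that $A\otimes B$ is left polaroid at each isolated point of its approximate point spectrum carrying nonzero nullity. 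Since $A,B$ are left polaroid, it suffices, in view of Theorem~\ref{thm1}, to supply property $(\p)$ for $A$ and $B$; but $A$ and $B$ are now \emph{polaroid}, and the discussion following Theorem~\ref{thm1} (leading to Corollary~\ref{cor1}, {\em cf.}~\cite[Theorem 7]{Mu}) shows that a polaroid operator admits, at every $\lambda\in\iso\sigma(T)$, a decomposition $\X=E_1\oplus E_2$ into $T$-invariant closed subspaces with $(T-\lambda)|_{E_1}$ nilpotent and $(T-\lambda)|_{E_2}$ invertible --- no complementation hypothesis needed. The one subtlety is that this decomposition is at isolated points of $\sigma(T)$, whereas left polaroidness concerns isolated points of $\sigma_a(T)$; here I would use that $A^*,B^*$ having SVEP (off $\sigma_{UBW}$, hence in particular at the relevant poles) yields $\iso\sigma_a(A)=\iso\sigma(A)$ and $\Pi^{\ell}(A)=\Pi(A)$, so the Kato-type splitting from \cite[Theorem 7]{Mu} is available at every $\lambda\in\iso\sigma_a(A)$ with $(A-\lambda)|_{E_1}$ nilpotent and $(A-\lambda)|_{E_2}$ bounded below.

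With those decompositions in hand for $A$ and for $B$, I would run the four-block tensor argument of the proof of Theorem~\ref{thm1} verbatim: write $\X\overline{\otimes}\Y=\bigoplus_{i,j}M_i\overline{\otimes}N_j$, observe $\{A\otimes B-\lambda(I\otimes I)\}^d$ annihilates every block except $M_2\overline{\otimes}N_2$ (via the binomial expansion of $S+T$ with $S=(A-\mu I)\otimes B$ and $T=\mu I\otimes(B-\nu I)$), and that on $M_2\overline{\otimes}N_2$ the operator is bounded below because $\mu\nu\notin\sigma_a(A|_{M_2})\sigma_a(B|_{N_2})$; the point $0$ is handled by the case analysis $(ii)$ of that same proof. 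This shows $A\otimes B$ is left polaroid, hence $E^a(A\otimes B)\subseteq\Pi^{\ell}(A\otimes B)$, completing the circle $a$-gWt $\Rightarrow$ $a$-gBt $\Rightarrow$ $a$-Bt for the necessity direction exactly as in Theorem~\ref{thm4}.

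The main obstacle I anticipate is the bookkeeping around which ``isolated-point'' set is in play: the cited decomposition results (\cite[Theorem 5]{Mu}, \cite[Theorem 7]{Mu}) and the polar/left-polar dictionary are phrased for $\sigma$ versus $\sigma_a$, and one must be careful that the SVEP hypotheses --- SVEP for $A^*$ and $B^*$ off $\sigma_{UBW}$, not full SVEP --- really do collapse these distinctions at the points that matter. Once it is checked that for a left polaroid $T$ with $T^*$ having SVEP off $\sigma_{UBW}(T)$ one has, at each $\lambda\in\iso\sigma_a(T)$, a $T$-invariant splitting with $(T-\lambda)|_{E_1}$ nilpotent and $(T-\lambda)|_{E_2}$ bounded below --- which is precisely the output of Lemma~\ref{lem1} but obtained now \emph{without} property $(\p)$ --- the rest is a transcription of arguments already in the paper.
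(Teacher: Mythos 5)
Your proposal is correct and follows essentially the same route as the paper: the authors likewise observe that the SVEP hypotheses upgrade each left pole of $A$ (resp.\ $B$) to a pole, yielding the $T$-invariant splitting with nilpotent and invertible parts from the discussion preceding Corollary~\ref{cor1} (without property $(\p)$), and then rerun the tensor decomposition argument of Theorem~\ref{thm1} together with the $a$-gBt equivalence from \cite[Theorem 1]{DDK} exactly as in Theorem~\ref{thm4}. Your extra care about the $\iso\sigma_a$ versus $\iso\sigma$ bookkeeping is a faithful (and slightly more explicit) rendering of what the paper leaves implicit.
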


\begin{demo}
The necessity follows as in the proof of Theorem \ref{thm4};
also, if $A$, $B$ satisfy $a$-Bt, and $\sigma_{aw}(A\otimes
B)=\sigma_{aw}(A)\sigma_a(B)\cup\sigma_a(A)\sigma_{aw}(B)$, then
$A\otimes B$ satisfies $a$-gBt. If $\mu\in\Pi^{\ell}(A)$, then
the SVEP hypothesis on $A^*$ implies implies the existence of
$A$-invariant subspaces $M_1$ and $M_2$ such that $\X=M_1\oplus
M_2$, $(A-\mu)|_{M_1}$ is nilpotent and $(A-\mu)|_{M_2}$ is
invertible (see the argument preceding Corllary \ref{cor1});
similarly, if $\nu\in\Pi^{\ell}(B)$, then the SVEP hypothesis on
$B^*$ implies the existence of $B$-invariant subspaces
$N_1$ and $N_2$ such that $\Y=N_1\oplus N_2$, $(B-\nu)|_{N_1}$ is
nilpotent and $(B-\nu)|_{N_2}$ is invertible. The slight changes
in the argument in the case in which one of $\mu$ and $\nu$ is
$0$ and the other is not a left pole being obvious, it follows
from the argument of the proof of Theorem \ref{thm1} that the
left polaroid property transfers from $A$ and $B$ to $A\otimes
B$. Hence, see the proof of Theorem \ref{thm4}, $A\otimes B$
satisfies $a$-gWt.\end{demo}

\indent Evidently, the operator $A\otimes B$
of Theorem \ref{thm51} satisfies (generalized Weyl's theorem,
$\sigma(A\otimes B)\setminus\sigma_{BW}(A\otimes B)= E(A\otimes
B)=\{\lambda\in\iso\sigma(A\otimes B): \lambda$ is an eigenvalue
of $A\otimes B \}$ and) $a$-Wt. More is true: $\sigma_a(A\otimes
B)\setminus\sigma_{UBW}(A\otimes B)=E(A\otimes B)$. To see this,
we observe that if the hypotheses of Theorem \ref{thm51} are
satisfied, then $\Pi^{\ell}(A)=\Pi(A)=E(A)$,
$\Pi^{\ell}(B)=\Pi(B)=E(B)$ and $\sigma_a(A\otimes
B)\setminus\sigma_{UBW}(A\otimes B)=\Pi^{\ell}(A\otimes B)=
E^a(A\otimes B)$. Evidently, $E(A\otimes B)\subseteq E^a(A\otimes
B)$. Let $\lambda\in E^a(A\otimes B)$. If $\lambda\neq 0$, then
there exists $\mu\in\iso\sigma_a(A)$ and $\nu\in\sigma_a(B)$ such
that $\mu\in\Pi^{\ell}(A)=\Pi(A)\subseteq E(A)$ and
$\nu\in\Pi^{\ell}(B)=\Pi(B)\subseteq E(B)$; hence $\lambda\in
E(A\otimes B)$. If, instead, $\lambda=0$, then either
$0\in\Pi^{\ell}(A)\cap \Pi^{\ell}(B)=\Pi(A)\cap \Pi(B)$, or $0$
is in one of $\Pi(A)$, $\Pi(B)$ and not in the other; in either
case $0\in E(A\otimes B)$. Hence $E(A\otimes B)\subseteq
E^a(A\otimes B)$. \par

\indent Theorem \ref{thm4} has a
$\tau_{AB}$ analogue.\par

\begin{cor}\label{cor3} Suppose that $A\in\b$
and $B^*\in B({\Y}^*)$ satisfy $a$-Bt. If (i) $A, B^*$ are
finitely left polaroid, or (ii) $\X, \Y$ are Hilbert spaces and
$A, B^*$ are left polaroid, then $\tau_{AB}$ satisfies $a$-gWt if
and only if
$\sigma_{aw}(\tau_{AB})=\sigma_{aw}(A)\sigma_a(B^*)\cup\sigma_a(A)\sigma_{aw}(B^*)$.\end{cor}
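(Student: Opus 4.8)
The plan is to reduce Corollary~\ref{cor3} to Theorem~\ref{thm4} by means of the now-familiar identification of $\tau_{AB}$ with a tensor product $A\otimes B^*$. First I would invoke the argument used in the proof of Theorem~\ref{thm3} (which itself follows \cite[Corollary 4]{DHK}): the space $B(B(\mathcal Y^*),\X)$ is an ultraprime Banach $(B(\mathcal Y^*),B(\X))$-bimodule, the ultraprime condition $\|L_AR_B\|=\|A\|\|B\|$ guarantees that the operator norm of the bimodule restricts to a uniform cross-norm on $\X\overline{\otimes}\mathcal Y^*$, and hence $\tau_{AB}$ acting on $B(\mathcal Y,\X)$ is unitarily equivalent to $A\otimes B^*$ acting on $\X\overline{\otimes}\mathcal Y^*$. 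Under this identification, $a$-Bt for $A$ and $B^*$ is exactly the hypothesis of Theorem~\ref{thm4} applied to the pair $(A,B^*)$, and the finitely left polaroid (resp.\ left polaroid, in the Hilbert space case) hypotheses on $A$ and $B^*$ match case (i) (resp.\ case (ii)) of that theorem verbatim.

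Next I would simply feed these hypotheses into Theorem~\ref{thm4}. That theorem, applied to $A$ and $B^*$, yields: $A\otimes B^*$ satisfies $a$-gWt if and only if $\sigma_{aw}(A\otimes B^*)=\sigma_{aw}(A)\sigma_a(B^*)\cup\sigma_a(A)\sigma_{aw}(B^*)$. Translating back through the equivalence $\tau_{AB}\cong A\otimes B^*$, the left side becomes ``$\tau_{AB}$ satisfies $a$-gWt'' and $\sigma_{aw}(\tau_{AB})=\sigma_{aw}(A\otimes B^*)$, so the stated spectral identity
$$\sigma_{aw}(\tau_{AB})=\sigma_{aw}(A)\sigma_a(B^*)\cup\sigma_a(A)\sigma_{aw}(B^*)$$
is precisely the right-hand condition. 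This closes the biconditional.

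The one point deserving care — and the only place where anything beyond bookkeeping is needed — is verifying that all the spectral quantities appearing in the statement ($\sigma_a$, $\sigma_{aw}$, and implicitly $\sigma_{UBW}$ and $E^a$, which enter through $a$-gWt) are genuinely preserved under the identification $\tau_{AB}\cong A\otimes B^*$. Since this identification is (up to the uniform cross-norm) an isometric isomorphism intertwining $\tau_{AB}$ with $A\otimes B^*$, all spectral sets and semi-Fredholm data transfer, so this is routine; one may also cite \cite[Proposition 4.3]{BDJ} for $\sigma_a(\tau_{AB})=\sigma_a(A)\sigma_a(B^*)$ and $\sigma_{ab}(\tau_{AB})=\sigma_{ab}(A)\sigma_a(B^*)\cup\sigma_a(A)\sigma_{ab}(B^*)$, together with the analogous $\sigma_{aw}$ identity, to avoid relying on the bimodule reduction at all. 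I expect no real obstacle; the whole corollary is a transcription of Theorem~\ref{thm4} via the ultraprime-bimodule trick already established in the proof of Theorem~\ref{thm3}, and the proof can be stated in two or three sentences: ``As in the proof of Theorem~\ref{thm3}, $\tau_{AB}$ is (unitarily equivalent to) $A\otimes B^*$; now apply Theorem~\ref{thm4} to the pair $A,B^*$.''
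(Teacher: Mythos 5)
Your proposal takes essentially the same route as the paper: the published proof is simply ``argue as in Theorem \ref{thm4}, using Theorem \ref{thm3} and the fact (\cite[Theorem 4.5]{BDJ}) that if $A$ and $B^*$ satisfy $a$-Bt then $\tau_{AB}$ satisfies $a$-gBt if and only if $\sigma_{aw}(\tau_{AB})=\sigma_{aw}(A)\sigma_a(B^*)\cup\sigma_a(A)\sigma_{aw}(B^*)$.''

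One point in your write-up deserves correction rather than just care. The reduction cannot rest on a literal ``unitary equivalence'' of $\tau_{AB}$ acting on $B(\Y,\X)$ with $A\otimes B^*$ acting on $\X\overline{\otimes}\Y^*$: the completed tensor product embeds in $B(\Y,\X)$ as the closure of the finite-rank operators, which even for Hilbert spaces is the (generally proper) ideal of compact operators. So $A\otimes B^*$ is only the restriction of $\tau_{AB}$ to a closed invariant subspace, and spectra, $\sigma_{aw}$, $\sigma_{UBW}$ and the sets $E^a$ do not transfer ``by isometric isomorphism.'' This is exactly why the paper does not apply Theorem \ref{thm4} verbatim to $A\otimes B^*$ and translate back, but instead reruns its argument with $\tau_{AB}$-specific inputs: $\sigma_a(\tau_{AB})=\sigma_a(A)\sigma_a(B^*)$ and the formula for $\sigma_{ab}(\tau_{AB})$ from \cite[Proposition 4.3]{BDJ}, the $a$-gBt characterization from \cite[Theorem 4.5]{BDJ} in place of \cite[Theorem 1]{DDK}, and Theorem \ref{thm3} (respectively, the finitely left polaroid analysis behind Theorem \ref{thm31}) to get $E^a(\tau_{AB})\subseteq\Pi^{\ell}(\tau_{AB})$. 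Your own fallback --- citing \cite[Proposition 4.3]{BDJ} and its companions directly and bypassing the bimodule identification --- is the correct fix, and with it your argument closes exactly as the paper's does; just be aware that the ultraprime-bimodule remark in the proof of Theorem \ref{thm3} is shorthand for this transfer of spectral data, not a claim that the two operators are the same operator on the same space.
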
\begin{demo}
To prove the corollary one argues as in the theorem above, using
Theorem \ref{thm3} and the fact that if $A$ and $B^*$ satisfy
$a$-Bt then $\tau_{AB}$ satisfies $a$-gBt if and only if
$\sigma_{aw}(\tau_{AB})=\sigma_{aw}(A)\sigma_a(B^*)\cup\sigma_a(A)\sigma_{aw}(B^*)$
(\cite[Theorem 4.5]{BDJ}).\end{demo}

 \medskip

\markright{ \hskip5truecm \rm ENRICO BOASSO and B. P. DUGGAL}
\baselineskip=12pt

\bigskip

\noindent \normalsize \rm Enrico Boasso\par
  \noindent  E-mail: enrico\_odisseo@yahoo.it \par
\medskip
\noindent B. P. Duggal\par
\noindent E-mail:  bpduggal@yahoo.co.uk
\end{document}